\newcommand{\Bbbk}{\mathbb{K}}
\newcommand{\xvect}{\mathbf{x}}
\newcommand{\Supp }{\textup{Supp }}
\newcommand{\Spec}{\textup{Spec }}
\newcommand{\Tor}{\textup{Tor }}
\newcommand{\Ad}{\textup{Ad}}
\newcommand{\dd}{d}
\newcommand{\Y}{\mathcal{Y}}
\newtheorem{theorem}{Theorem}
\newtheorem{proposition}[theorem]{Proposition}
\newtheorem{lemma}[theorem]{Lemma}
\newtheorem{example}[theorem]{Example}
\newtheorem{remark}[theorem]{Remark}
\title{A formula for the cohomology and $K$-class of a regular Hessenberg variety}
\author{Erik Insko, Julianna Tymoczko, and Alexander Woo}
\thanks{The second author was partially supported by NSF Grant DMS-1362855.  The third author was partially supported by Simons Collaboration Grant 359792.}
\begin{document}

\begin{abstract}
Hessenberg varieties are subvarieties of the flag variety parametrized by a linear operator $X$ and a nondecreasing function $h$.  The family of Hessenberg varieties for regular $X$ is particularly important: they are used in quantum cohomology, in combinatorial and geometric representation theory, in Schubert calculus and affine Schubert calculus.  We show that the classes of a regular Hessenberg variety in the cohomology and $K$-theory of the flag variety are given by making certain substitutions in the Schubert polynomial (respectively Grothendieck polynomial) for a permutation that depends only on $h$. Our formula and our methods are different from a recent result of Abe, Fujita, and Zeng that gives the class of a regular Hessenberg variety with more restrictions on $h$ than here.
\end{abstract}

\maketitle

Fix an algebraically closed field $\Bbbk$, let $G=GL_n(\Bbbk)$, and let $B$ be the subgroup of upper triangular matrices.  The {\em flag variety} $G/B$ can be thought of as the moduli space of {\em complete flags}, which are chains $\{0\}=F_0\subset F_1\subset\cdots\subset F_{n-1}\subset F_n=\Bbbk^n$ of subspaces of a fixed vector space $\Bbbk^n$ so that $\dim F_i=i$ for each $i$.  The {\em Hessenberg variety} $\Y_{X,h}\subseteq G/B$ is defined as follows.  Fix a linear operator $X$ and a function $h:\{1,\ldots,n\}\rightarrow \{1,\ldots,n\}$ such that $h(i)\geq i$ and $h(i)\leq h(i+1)$ for all $i$.  Then
$$\Y_{X,h}=\{F_\bullet\mid XF_i\subseteq F_{h(i)}\ \forall i\}.$$  Note that $\Y_{X,h}$ and $\Y_{X',h}$ are isomorphic if $X$ and $X'$ are similar operators.  
 
The Hessenberg varieties defined by regular operators form an important class of Hessenberg varieties.  A regular operator is an operator $X$ such that the generalized eigenspaces of $X$ have distinct eigenvalues.  De Mari, Procesi, and Shayman first defined and studied Hessenberg varieties in the case when $X$ is regular and semisimple ~\cite{DMS88,DMPS92}. Regular semisimple Hessenberg varieties govern an important geometric representation connected to the Stanley-Stembridge conjecture through a conjecture of Shareshian and Wachs~\cite{SW16} that was recently proven by Brosnan and Chow~\cite{BC18} and almost simultaneously by Guay-Paquet~\cite{GP15}.  
Regular nilpotent Hessenberg varieties are also extremely important: Kostant proved that with one particular $h$, they can be used to construct the quantum cohomology of the flag variety~\cite{Kos96} (see also Rietsch's work~\cite{Rie03}).  More general regular nilpotent Hessenberg varieties have since been studied prolifically, for example in~\cite{AHHM17, ADGH18, Dre15, MT13, ST06}. 
 
In this paper, for any $h$ and regular operator $X$, we give formulas for the class of $\Y_{X,h}$ in the cohomology $H^*(G/B)$ and $K$-theory $K^0(G/B)$ (or Grothendieck ring) of the flag variety.  Anderson and the second author gave a different formula for the cohomology class using degeneracy loci and degeneration arguments~\cite{AT10}.  The first two authors computed some of the coefficients in the Schubert expansions of these cohomology classes using intersection theory~\cite{IT16}.  Recently, Abe, Fujita, and Zeng gave a formula in K-theory~\cite[Cor. 4.2]{AFZ18}.  We use a different approach from all of the previous authors, following ideas of Knutson and Miller that use commutative algebra and the pullback of $G/B$ to $G$~\cite{KM05}; this gives us a different formula than that of Abe, Fujita, and Zeng.

Given $h$, we define a permutation $w_{h}\in S_{2n}$ as follows.  We let $w_h(i+h(i))=n+i$ and put $1,\ldots,n$ in the other entries in order.  
Let $x_j$ denote the class of the $j$-th tautological line bundle (in $K^0(G/B)$) or the Chern class of its dual (in $H^*(G/B)$).  Then our main result is:
\begin{theorem} \label{theorem: main theorem}
Suppose $X$ is a regular operator.  The class of the the Hessenberg variety $\Y_{X,h}$ in $K^*(G/B)$ is represented by
$$[\Y_{X,h}] = \mathfrak{G}_{w_h}(x_1,\ldots,x_{h(1)},x_1,x_{h(1)+1},\ldots,x_{h(2)},x_2,x_{h(2)+1},\ldots,x_{h(n)},x_n)$$
where $\mathfrak{G}$ is the Grothendieck polynomial.  Similarly, the class in $H^*(G/B)$ is represented by
$$[\Y_{X,h}] = \mathfrak{S}_{w_h}(x_1,\ldots,x_{h(1)},x_1,x_{h(1)+1},\ldots,x_{h(2)},x_2,x_{h(2)+1},\ldots,x_{h(n)},x_n)$$
where $\mathfrak{S}$ is the Schubert polynomial.
\end{theorem}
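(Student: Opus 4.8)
The plan is to transfer the problem from $G/B$ to $G=GL_n$ along the projection $\pi\colon GL_n\to GL_n/B$, convert the incidence conditions into rank conditions on matrices, recognize the resulting variety as a linear section of a matrix Schubert variety, and apply the Knutson--Miller formulas. Since conjugate operators give the same classes in $H^*(G/B)$ and $K^*(G/B)$ and the regular locus in $M_n$ is irreducible, $[Y_{X,h}]$ is independent of the choice of regular $X$ (one sees this from a flatness argument, or runs everything below for general regular $X$); we take $X$ to be the principal nilpotent operator $e_i\mapsto e_{i-1}$ (with $e_0=0$), which turns the matrix entries below into coordinate functions. Representing a flag by the column-span filtration of $g\in GL_n$, the condition $XF_i\subseteq F_{h(i)}$ becomes $\operatorname{rank}[\,g_1\ \cdots\ g_{h(i)}\ \ Xg_1\ \cdots\ Xg_i\,]\le h(i)$ for $1\le i\le n$, where $g_j$ is the $j$-th column of $g$; let $\widetilde Y\subseteq M_n$ be the scheme these minors cut out. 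Since $\pi$ is a Zariski-locally trivial $B$-bundle and $\widetilde Y\cap GL_n=\pi^{-1}(Y_{X,h})$ scheme-theoretically, it suffices to show $\widetilde Y$ is Cohen--Macaulay and to compute its $B$-equivariant (equivalently, column-torus equivariant) cohomology and $K$-classes.

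The combinatorial heart is the following. The $2n$ vectors $g_1,\dots,g_n,Xg_1,\dots,Xg_n$ may be reordered, interleaving the block $g_{h(i-1)+1},\dots,g_{h(i)}$ with the single vector $Xg_i$, so that the Hessenberg conditions become \emph{northwest} rank conditions $\operatorname{rank}(M_{[n],[\,i+h(i)\,]})\le h(i)$; by Fulton's theory these are exactly the essential rank conditions of the permutation $w_h\in S_{2n}$ --- which is precisely what the definition of $w_h$ is built to achieve, $i+h(i)$ being the position where $w_h$ places the value $n+i$. After dualizing and enlarging the ambient matrix one identifies $\widetilde Y$, up to a free affine factor carrying the trivial torus action, with $\overline{X_{w_h}}\cap L$, where $\overline{X_{w_h}}\subseteq M_{2n}$ is the matrix Schubert variety of $w_h$ and $L\subseteq M_{2n}$ is the torus-stable linear subspace recording that the second block of columns is $X$ applied to the first. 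Crucially, the diagonal torus $T'\cong(\Bbbk^{\times})^n$ acting on $M_n$ by right multiplication scales the $j$-th column of $g$ and the $j$-th column of $Xg$ by the same parameter $t_j$; after the reordering the $2n$ weights become $t_1,\dots,t_{h(1)},t_1,t_{h(1)+1},\dots,t_{h(2)},t_2,\dots,t_{h(n)},t_n$, so that the inclusion $T'\hookrightarrow(\Bbbk^{\times})^{2n}$ induces on equivariant classes exactly the substitution in the statement.

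Granting for the moment that $\widetilde Y$ has the expected codimension $\ell(w_h)=\sum_i\bigl(n-h(i)\bigr)$, the conclusion is formal. By Knutson--Miller, $\overline{X_{w_h}}$ is Cohen--Macaulay, with equivariant cohomology class $\mathfrak S_{w_h}$ and $K$-class $\mathfrak G_{w_h}$ in the $2n$ torus variables. The codimension count forces the equations of $L$ to form a regular sequence on $\overline{X_{w_h}}$, so $\overline{X_{w_h}}\cap L$ is Cohen--Macaulay of the expected dimension and the intersection is $\operatorname{Tor}$-independent; hence $\widetilde Y$ is Cohen--Macaulay, and its cohomology and $K$-classes are the restrictions of $\mathfrak S_{w_h}$ and $\mathfrak G_{w_h}$ along $T'\hookrightarrow(\Bbbk^{\times})^{2n}$, i.e. the stated substitutions. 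Pushing back down along the $B$-bundle $\pi$, via the standard dictionary between matrix-Schubert classes and Schubert classes of $G/B$, gives both displayed formulas; Cohen--Macaulayness of $\widetilde Y$ gives that of $Y_{X,h}$, and equidimensionality follows.

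The real work, as I see it, is the codimension count --- equivalently, the Cohen--Macaulayness of $\widetilde Y$ --- and the plan there is a Gröbner degeneration. With $X$ principal nilpotent the defining minors of $\widetilde Y$ are determinants of a matrix whose entries are variables, repeated variables, and zeros, and one wants a term order, adapted to the Knutson--Miller antidiagonal order on $\overline{X_{w_h}}$ together with the linear forms cutting out $L$, for which these minors form a Gröbner basis with \emph{squarefree} initial ideal whose Stanley--Reisner complex is shellable of dimension $\sum_i h(i)$. In ideal-theoretic terms one needs $\operatorname{in}(I_{w_h}+I_L)=\operatorname{in}(I_{w_h})+\operatorname{in}(I_L)$ --- that adjoining the (binomial) equations of $L$ does not destroy the Knutson--Miller Gröbner basis, and that the entries repeated through $Xg$ produce no new minimal generators of the initial ideal. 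Checking that the relevant $S$-pairs reduce to zero, and identifying the resulting simplicial complex, is the delicate point. Once the initial ideal is pinned down, extracting the multidegree and $K$-polynomial and matching them to $\mathfrak S_{w_h}$ and $\mathfrak G_{w_h}$ is routine from the pipe-dream combinatorics of these polynomials.
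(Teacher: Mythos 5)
Your geometric set-up --- reordering the $2n$ columns $g_1,\dots,g_n,Xg_1,\dots,Xg_n$ so that the Hessenberg conditions become northwest rank conditions for $w_h$, realizing $\widetilde Y$ as a torus-stable linear section of $Z_{w_h}\subseteq M_{2n}$ up to an affine factor, and reading off the substitution from the inclusion of tori --- is exactly the construction used in the paper (their map $\phi_{X,h}$, Proposition~\ref{proposition: main step}, and the grading identification in Proposition~\ref{proposition: relate class to K-polynomial}). The gap is in the step you yourself flag as ``the real work.''

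You propose to prove the codimension count by producing a term order for which the ideal of $\widetilde Y\subseteq M_n$ has a \emph{squarefree} initial ideal with a shellable Stanley--Reisner complex. This cannot succeed as stated: a squarefree initial ideal forces the ideal to be radical, but $Y_{X,h}$ is typically \emph{not} reduced as a scheme when $h(i)=i$ for some $i$ and $X$ is regular nilpotent. Concretely, for $n=2$, $h=(1,2)$ and $X$ principal nilpotent, the generators of $J_{X,h}$ reduce to the single equation $z_{21}^2=0$, a non-radical hypersurface; no term order gives a squarefree initial ideal, and the scheme is not even generically reduced. The Gr\"obner degeneration plan is therefore not merely unfinished but structurally incompatible with the scheme in question.

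There is also a more basic mismatch of targets. You are trying to establish Cohen--Macaulayness and expected codimension for $\widetilde Y$ \emph{globally} on $M_n$. The paper never asserts this, and in fact its formula is only claimed to hold in $K^*(G/B)$, i.e.\ modulo $J_n$, not in $K_T^0(M_n)$ --- a point the authors emphasize when comparing with Abe--Fujita--Zeng. What the paper actually proves is weaker and suffices: the codimension is correct at every point of the open locus $G\subseteq M_n$, which is precisely Precup's dimension theorem $\dim Y_{X,h}=\sum_i(h(i)-i)$, taken as a black-box input. Their Theorem~\ref{theorem: main step} then converts this local codimension statement into the needed conclusion: the linear forms cutting out $L$ form a regular sequence on $(S/I_{w_h})_{\mathfrak p}$ for $\mathfrak p\in G$, all higher Tors vanish there, so the $K$-polynomials of $S/I_{w_h}$ and of $R/J_{X,h}$ differ only by terms supported on $M_n\setminus G$, and $R/J_{X,h}$ is Cohen--Macaulay at points of $G$. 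Cohen--Macaulayness of $Y_{X,h}$ then follows via Proposition~\ref{proposition: reduced, cohen-macaulay}. Two minor further points: the scheme-theoretic identification $\widetilde Y\cap G=\pi^{-1}(Y_{X,h})$ is the paper's Theorem~\ref{theorem: two Hessenberg ideals}, a genuine Pl\"ucker-identity computation that should not be taken for granted; and the specialization to $X$ principal nilpotent handles the class in $K^*(G/B)$, but does not by itself give the Cohen--Macaulayness of $Y_{X,h}$ for \emph{every} regular $X$, which the paper obtains uniformly from its commutative-algebra argument rather than by degeneration.
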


We also give an alternate proof that $\Y_{X,h}$ is Cohen-Macaulay and hence equidimensional as a scheme.  This is also implied by recent work of Abe, Fujita, and Zeng ~\cite[Cor. 3.8]{AFZ18}, as we elaborate 
in Remark~\ref{remark: cohen-macaulay}.

We rely on a recent result of Precup stating that, for any regular operator $X$, the dimension of $\Y_{X,h}$ is $\dim(\Y_{X,h})=\sum_{i=1}^n (h(i)-i)$~\cite{Pre16}.  We note that the results of Theorem \ref{theorem: main theorem} also hold for nonregular operators provided the dimension criterion $\dim(\Y_{X,h})=\sum_{i=1}^n (h(i)-i)$ is satisfied.

One feature of our formulas is that the resulting polynomials are manifestly positive sums of monomials.  (For $K$-theory classes, positivity is in the usual sense that the sign of the monomial depends on the parity of the difference between the degree of the monomial and the codimension of the variety.)

The polynomials resulting from our formula generally differ from those obtained from the formulas of Anderson and the second author~\cite{AT10} and Abe, Fujita, and Zeng~\cite{AFZ18}. This is because our formulas hold only in $H^*(G/B)$ or $K^*(G/B)$.  In other words, our formulas agree with previous results only modulo the ideal $I_n=\langle e_d(\xvect)\rangle$ for formulas in cohomology or $J_n=\langle e_d(\xvect)-\binom{n}{d}\rangle$ in $K$-theory.

\begin{example}
Let $n=4$, and let $h$ be the function given by $h(1)=2$, $h(2)=3$, and $h(3)=h(4)=4$.  Then $w_h=12536478\in S_{8}$.  The theorem states that
\begin{align*} [Y_{2344}] &  = \mathfrak{G}_{12536478}(x_1,x_2,x_1,x_3,x_2,x_4,x_3, x_4) \\
= & - x_{1}^{4} x_{2}^{3} x_{3} + x_{1}^{4} x_{2}^{3} + 3 x_{1}^{4} x_{2}^{2} x_{3} + 4 x_{1}^{3} x_{2}^{3} x_{3} - 3 x_{1}^{4} x_{2}^{2} - 4 x_{1}^{3} x_{2}^{3} - 3 x_{1}^{4} x_{2} x_{3} - 12 x_{1}^{3} x_{2}^{2} x_{3} \\ &    - 6 x_{1}^{2} x_{2}^{3} x_{3}  + 3 x_{1}^{4} x_{2} + 12 x_{1}^{3} x_{2}^{2} + 6 x_{1}^{2} x_{2}^{3} + x_{1}^{4} x_{3} + 12 x_{1}^{3} x_{2} x_{3} + 14 x_{1}^{2} x_{2}^{2} x_{3} + 4 x_{1} x_{2}^{3} x_{3} \\ & -  x_{1}^{4} - 10 x_{1}^{3} x_{2} - 13 x_{1}^{2} x_{2}^{2} - 4 x_{1} x_{2}^{3} - 4 x_{1}^{3} x_{3} - 11 x_{1}^{2} x_{2} x_{3} - 6 x_{1} x_{2}^{2} x_{3} -  x_{2}^{3} x_{3} \\&  + 2 x_{1}^{3}  + 7 x_{1}^{2} x_{2} + 4 x_{1} x_{2}^{2} + x_{2}^{3} + 3 x_{1}^{2} x_{3} + 2 x_{1} x_{2} x_{3} + x_{2}^{2} x_{3}  \end{align*} 
 in   $K^*(G/B)$, and 
\begin{align*} 
[Y_{2344}] & = \mathfrak{S}_{12536478}(x_1,x_2,x_1,x_3,x_2,x_4,x_3, x_4)  \\ 
           &= 2x_1^3 + 7x_1^2x_2 + 4x_1x_2^2 + x_2^3 + 3x_1^2x_3  + 2x_1x_2x_3  + x_2^2x_3
\end{align*}
 in $H^*(G/B)$.
\end{example}


\section{Comparing the group to the flag variety}

The central principle in matrix Schubert calculus is that the global algebraic geometry of $G/B$ (for $G=GL_n(\Bbbk)$) and its subvarieties can be studied by looking at $B$-invariant subvarieties of $G$ or, better yet, at $B$-invariant subvarieties of the closure of $G$ (see, e.g., Fulton~\cite{Ful92} or Knutson and Miller~\cite{KM05}).  The closure of $G$ is the space $M_n$ of all $n\times n$ matrices and is isomorphic to $\Bbbk^{n^2}$, so the algebraic geometry of $G/B$ can be studied using commutative algebra in its simplest setting, that of a polynomial ring in $n^2$ variables.

One way to find representatives for classes of subschemes of $G/B$ is to use commutative algebra in $M_n$.  We follow Knutson and Miller's description, starting with standard definitions~\cite[Section 1.2]{KM05},~\cite[Chapter 8]{MS}.  Given a $\mathbb{Z}^\dd$-grading of $S=\Bbbk[z_1,\ldots,z_m]$ and a graded $S$-module $M$, write $\xvect^{\alpha}=\prod_{1 \leq i \leq \dd } x_i^{\alpha_i}$, and let $M_\alpha$ denote the $\alpha$-graded piece of $M$.  The {\em Hilbert series} of $M$ is
$$\mathcal{H}(M;\xvect) = \sum_{\alpha\in\mathbb{Z}^\dd} \dim(M_\alpha) \xvect^{\alpha},$$
and the $K$-polynomial of $M$ is
$$\mathcal{K}(M;\xvect)=\mathcal{H}(M;\xvect)\prod_{i=1}^m (1-\xvect^{\deg(z_i)}).$$

Write $\mathcal{K}(M;\mathbf{1}-\xvect)$ for the $K$-polynomial after substituting $x_i\mapsto (1-x_i)$ for all $i$.  The {\em multidegree} of $M$ (in the $x$-variables) is the polynomial $\mathcal{C}(M;\xvect)$ formed by adding all terms in $\mathcal{K}(M;\mathbf{1}-\xvect)$ of minimal $x$-degree.  (If the module $M$ has dimension $m-r$ over $S$ then this minimal degree equals the ``codimension" $r$ of $M$ \cite[Theorem D]{KM05}.)  

In our context, we use $M_n=\Spec \Bbbk[z_{11},z_{12}, \ldots, z_{1n}, z_{21}, \ldots,z_{nn}]$ with the grading coming from setting $\deg(z_{ij})=e_j$ where $e_j$ is the $j$-th coordinate vector in $\mathbb{Z}^n$.  We let $\pi: G\rightarrow G/B$ denote the natural projection.

Let $Y\subseteq G/B$ be a closed subscheme, and $\overline{Y}\subseteq M_n$ a subscheme such that $\overline{Y}\cap G=\pi^{-1}(Y)$.  Knutson and Miller proved that the multidegree (respectively $K$-polynomial) of the coordinate ring of $\overline{Y}$ represents the class of $Y$ in the cohomology (respectively $K$-theory) of the flag variety of $Y$.  The following is essentially their proof~\cite[Cor. 2.3.1]{KM05}.  We added the last paragraph explicitly identifying each variable $x_j$ in the $K$-polynomial with the class of the $j^{th}$ tautological line bundle $\mathcal{L}_j$ on $G/B$ or in the multidegree with the (first) Chern class $c_1$ of its dual.

\begin{proposition} \label{proposition: relate class to K-polynomial}
Let $Y\subseteq G/B$ be a closed subscheme, and let $\overline{Y}$ be a closed subscheme of $M_n$ such that $\overline{Y}\cap G=\pi^{-1}(Y)$.  Let $\mathcal{K}(\overline{Y})$ be the $K$-polynomial and $\mathcal{C}(\overline{Y})$ the multidegree of $\overline{Y}$ both with the grading coming from setting $\deg(z_{ij})=x_j$.  Then the classes $[Y]$ in the $K$-theory (respectively cohomology) of $G/B$ are represented by $\mathcal{K}(\overline{Y})$ (respectively $\mathcal{C}(\overline{Y})$), where $x_j$ stands for $[\mathcal{L}_j]$ (respectively $c_1([-\mathcal{L}_j])$).
\end{proposition}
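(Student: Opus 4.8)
The plan is to read the $K$-polynomial and the multidegree as \emph{equivariant} classes on $M_n$, and then transport them to $G/B$ along the open inclusion $G\subseteq M_n$ together with the free right $B$-action on $G$ whose quotient is $G/B$. Equip $M_n\cong\Bbbk^{n^2}$ with the right action of the diagonal torus $T\cong(\Bbbk^{\times})^n$ scaling the $j$-th column by $t_j$; the associated $\mathbb{Z}^n$-grading of $\Bbbk[M_n]$, in which $z_{ij}$ has degree $x_j$, is precisely the one in the statement. Since $M_n$ is a linear $T$-representation, homotopy invariance of equivariant Chow groups and of equivariant $K$-theory gives $A^*_T(M_n)\cong A^*_T(\mathrm{pt})=\mathbb{Z}[x_1,\ldots,x_n]$ and $K_T(M_n)\cong K_T(\mathrm{pt})=R(T)$. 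Under the standard dictionary between $\mathbb{Z}^n$-graded $\Bbbk[M_n]$-modules and $T$-equivariant sheaves on $M_n$~\cite[Section 1.2]{KM05},~\cite[Chapter 8]{MS}, the $K$-polynomial $\mathcal{K}(\overline Y;\xvect)$ \emph{is} the class $[\mathcal{O}_{\overline Y}]$ in $K_T(M_n)$ and the multidegree $\mathcal{C}(\overline Y;\xvect)$ \emph{is} the class $[\overline Y]$ in $A^*_T(M_n)$; I would take this as the starting point, since it is essentially definitional.

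Next I would transport these classes to $G/B$ in three steps. \emph{(i)} Restrict along the open immersion $G\hookrightarrow M_n$, the complement of the $T$-invariant hypersurface $\{\det=0\}$: flat pullback on equivariant Chow groups and $K$-theory sends $[\overline Y]$ to $[\overline Y\cap G]$ and $[\mathcal{O}_{\overline Y}]$ to $[\mathcal{O}_{\overline Y\cap G}]$. \emph{(ii)} Use that $T$ (indeed all of $B$) acts freely on $G$ on the right, so that $A^*_T(G)\cong A^*(G/T)$ and $K_T(G)\cong K(G/T)$ by the standard descent theorems of Edidin--Graham and Thomason for free actions, and then that $p\colon G/T\to G/B$ is an affine bundle --- it factors through the filtration of the unipotent radical of $B$ as a tower of affine-line bundles, with fibre $B/T\cong\Bbbk^{\binom{n}{2}}$ --- so that $p^*$ is an isomorphism on Chow groups and $K$-theory. \emph{(iii)} Observe that, because $\overline Y\cap G$ is right-$B$-invariant, $(\overline Y\cap G)/T=p^{-1}\bigl(\pi(\overline Y\cap G)\bigr)$; hence the equivariant class $[\overline Y\cap G]\in A^*_T(G)=A^*(G/T)$ equals $p^*[\pi(\overline Y\cap G)]$, so it corresponds to $[\pi(\overline Y\cap G)]$ under the isomorphism $p^*$, and likewise in $K$-theory. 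Composing the three steps, $\mathcal{C}(\overline Y;\xvect)$ and $\mathcal{K}(\overline Y;\xvect)$ are carried to $[\pi(\overline Y\cap G)]$ in $A^*(G/B)$ and $K(G/B)$ respectively.

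It remains to identify the variables, which is the purpose of the paragraph the authors add to Knutson and Miller's argument. Under the composite $A^*_T(\mathrm{pt})=A^*_T(M_n)\to A^*_T(G)=A^*(G/T)\cong A^*(G/B)$, the first map classifies the principal $T$-bundle $G\to G/T$, so $x_j$ is carried to $c_1$ of the line bundle on $G/T$ built from the $T$-weight of $z_{ij}$; that weight extends to the $j$-th diagonal-entry character of $B$, so the bundle descends to $G/B$. A direct check on the matrix Schubert hypersurfaces $\overline{X_{s_k}}$, the vanishing loci of the leading principal $k\times k$ minors --- these are right-$B$-invariant, have multidegrees $x_1+\cdots+x_k$, and map onto the Schubert divisors in $G/B$ --- then identifies the resulting class with $c_1(\mathcal{L}_j^\vee)=c_1([-\mathcal{L}_j])$, where $\mathcal{L}_j$ is the $j$-th tautological line bundle. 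The parallel $K$-theoretic computation gives $x_j\mapsto[\mathcal{L}_j]$, consistently with the multidegree being the lowest-degree part of $\mathcal{K}(\mathbf{1}-\xvect)$: the substitution $x_j\mapsto 1-x_j$ matches the passage from $[\mathcal{L}_j]$ to the leading term of $1-[\mathcal{L}_j]$, namely $-c_1(\mathcal{L}_j)=c_1(\mathcal{L}_j^\vee)$.

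Of these ingredients the geometry is routine --- it is Knutson and Miller's argument, and homotopy invariance of equivariant Chow groups and $K$-theory, descent for free quotients, affine-bundle invariance, and flat pullback to an open subscheme are all standard, needing no hypothesis on the characteristic. I expect the step demanding real care to be the variable identification: keeping the weight conventions, the duality $\mathcal{L}_j$ versus $\mathcal{L}_j^\vee$, and the two independent sign reversals --- one between $K$-theory and cohomology, one built into the $\mathbf{1}-\xvect$ substitution defining the multidegree --- mutually consistent, so that the formula lands on exactly $[\mathcal{L}_j]$ and $c_1(\mathcal{L}_j^\vee)$ rather than on their inverses.
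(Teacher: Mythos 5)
Your proposal is correct and follows the same overall strategy as the paper's --- work $T$-equivariantly on $M_n$, restrict along the open immersion $G\hookrightarrow M_n$, descend to $G/B$, and then identify the variable $x_j$ with the tautological line bundle --- but the middle step is handled differently. The paper passes directly through $K^0(G/B)\cong K_B^0(G)\cong K_T^0(G)$, and identifies $[\overline{Y}\cap G]$ with $[\pi(\overline{Y}\cap G)]$ by resolving $\mathcal{O}_{\overline{Y}\cap G}$ by vector bundles on $G$ and asserting (with a pointer back to the local-triviality argument in Proposition~\ref{proposition: reduced, cohen-macaulay}) that this resolution is pulled back from $G/B$; it then gets the cohomology statement from $K$-theory via the Chern map. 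You instead factor through $G/T$: descent for the free $T$-action gives $K_T(G)\cong K(G/T)$ and $A^*_T(G)\cong A^*(G/T)$, and the affine-bundle map $G/T\to G/B$ (with fibre $B/T\cong U$) gives an isomorphism back to $G/B$; the $B$-saturation of $\overline Y\cap G$ immediately gives $(\overline Y\cap G)/T=p^{-1}(\pi(\overline Y\cap G))$, so the matching of classes is just flat pullback of a closed subscheme. This is a cleaner way to make rigorous the step that the paper leaves somewhat informal, and it also treats cohomology in parallel rather than deducing it from $K$-theory. For the variable identification, the paper matches the $T$-equivariant line bundle $\langle z_{ij}\rangle$ directly against the definition of $\mathcal{L}_j$ as an associated bundle, while you arrive at the same character-theoretic identification and then pin down the remaining sign/duality ambiguity by testing against the matrix Schubert divisors $Z_{s_k}$; that cross-check is sound, though one should be a little careful that the class of the opposite Schubert divisor in terms of $c_1(\mathcal{L}_j^\vee)$ is an input here, whereas in the paper's development that formula is derived downstream as a consequence (so the direct bundle comparison is the cleaner choice in context).
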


\begin{proof} The statement about cohomology follows from the statement about $K$-theory by taking the Chern character.  (The sign change comes from $x_j \mapsto 1-x_j$ in the definition of multidegree.)

The inclusion $G\rightarrow M_n$ induces a surjection $K_T^0(M_n)\rightarrow K_T^0(G)$.  Pulling back vector bundles gives the isomorphism $K^0(G/B)\cong K_B^0(G)$ and restricting the $B$-action to a $T$-action gives the isomorphism $K_B^0(G) \cong K_T^0(G)$.  Call the composition $\phi: K_T^0(M_n)\rightarrow K^0(G/B)$.

The $K$-polynomial $\mathcal{K}(\overline{Y})$ of the coordinate ring of $\overline{Y}$ represents $[\overline{Y}]$ in $K^0_T(M_n)$.  We now prove $\phi([\overline{Y}])=[Y]$.  The surjection $K_T^0(M_n) \rightarrow K_T^0(G)$ sends $[\overline{Y}]$ to $[\overline{Y}\cap G]$.  The group $B$ acts on $\overline{Y}\cap G$. Restricting from the $B$-action to the $T$-action allows us to identify the class of $[\overline{Y}\cap G]$ in $K_T^0(G)$ with that in $K_B^0(G)$.
The flag variety $G/B$ is smooth so there is a resolution $\widetilde{Y}_\bullet$ of $Y$ using vector bundles over $G/B$. The projection $\pi: G \rightarrow G/B$ is flat, so $\pi^{-1}(\widetilde{Y}_\bullet)$ is a resolution of $\pi^{-1}(Y)$ by $B$-equivariant vector bundles. Since $\overline{Y} \cap G = \pi^{-1}(Y)$ by hypothesis, this argument identifies the class $[\overline{Y}\cap G]\in K_B^0(G)$ with $[Y]\in K^0(G/B)$ under the given isomorphism.

Finally we show $\phi(x_j)=[\mathcal{L}_j]$.  In $K_T^0(M_n)$ the variable $x_j$ stands for the degree of the variable $z_{ij}$ or equivalently the class of the principal ideal $[\langle z_{ij}\rangle]$ for any choice of $i$.  The graded module $\langle z_{ij} \rangle$ corresponds to the sheaf of $T$-equivariant sections of the $T$-equivariant line bundle $M_n\times \Bbbk$ on which $T$ acts on the right by $(\mathbf{z},y)\cdot t=(\mathbf{z}\cdot t, yt_{j})$, where $t_j$ is the $j$-th diagonal entry in $T$.  On the other hand $\mathcal{L}_j$ is defined as the quotient of $G\times\Bbbk$ by the equivalence relation $(g, y) \sim (gb, e_j(b)y)$ where $e_j: B\rightarrow\Bbbk^*$ picks out the $j$-th diagonal entry.  This description coincides with the definition of $x_j$ and proves the claim.
\end{proof}

\section{Matrix Schubert varieties and Grothendieck and Schubert polynomials}

This section contains definitions and basic properties of matrix Schubert varieties, including that their Grothendieck and Schubert polynomials are their $K$-polynomials and multidegrees, respectively.  The results in this section come from Knutson and Miller~\cite{KM05}; details and more context can be found there, with notational conventions transposed from ours.  

Let $S=\Bbbk[z_{11},\ldots,z_{nn}]$ and $M_n=\Spec S$.  We think of $M_n$ as the space of all $n\times n$ matrices and $z_{ij}$ as the coordinate function on the $(i,j)$-th entry.  Let $v_j$  denote the $j$-th column of a generic element of $M_n$, and write $v_j=\sum_{i=1}^n z_{ij}e_i$ where $e_i$ is the $i$-th standard basis vector.  (Intuitively $v_j$ can be thought of as the vector-valued function on $M_n$ that picks out the $j$th column of a matrix.)

Fulton defined the {\em Schubert determinantal ideal} $I_w\subseteq S$ and the {\em matrix Schubert variety} $Z_w=\Spec S/I_w\subseteq M_n$ of a permutation $w \in S_n$ as follows~\cite{Ful92}.  For each pair $i,j$ with $1\leq i,j\leq n$, let 
\[r_{ij}(w)=\#(\{w(1),\ldots,w(j)\}\cap\{1,\ldots,i\})\]  
be the rank of the northwest $i\times j$ submatrix of the permutation matrix for $w$  (by which we mean the matrix containing $1$ in entry $(w(i),i)$ and 0's elsewhere). Given a positive integer $m\leq n$, an ordered set $R=(R_1,\ldots,R_m)$ of indices in $\{1,\ldots,n\}$, and an ordered set $C=(C_1,\ldots,C_m)$ of vectors in $\Bbbk^n$, define 
\[d_{R,C} = \det\left(m\times m \textup{ matrix whose} (i,j) \textup{-th entry is the } R_i \textup{-th entry of } C_j\right).\]  
Then let
$$I_{w,i,j}=\langle d_{A,B} \mid A\subseteq\{1,\ldots,i\}, B\subseteq\{v_1,\ldots,v_j\}, \#A=\#B = r_{i,j}(w)+1 \rangle$$
and 
$$I_w=\sum_{i,j=1}^n I_{w,i,j}.$$
In words, $I_w$ is generated by the size $r_{ij}(w)+1$ minors of the northwest $i\times j$ submatrix of a generic element of $M_n$ for all $i$ and $j$.  

For each $w$ the {\em matrix Schubert variety} $Z_w$ is the subvariety of all $n\times n$ matrices whose northwest $i\times j$ submatrices all have rank at most $r_{ij}(w)$. Given a permutation $w$, recall that the length $\ell(w)$ is the minimum number of simple transpositions in a reduced word for $w$, or equivalently $\ell(w)$ is the number of pairs $1 \leq i<j \leq n$ with $w(i)>w(j)$.  Fulton showed the following~\cite{Ful92}.

\begin{proposition} \label{proposition: matrix Schubs}
Fix a permutation $w$ and its matrix Schubert variety $Z_w$.
\begin{enumerate}
\item The matrix Schubert variety is defined (as a reduced scheme) as $Z_w=\Spec S/I_w$ 
\item We have $Z_w \cap G = \pi^{-1}(w_0Y_{w_0w})$ where $w_0Y_{w_0w} := \overline{B^-wB/B}$
\item The matrix Schubert variety $Z_w$ is $\overline{Z_w\cap G}$ and hence is irreducible.
\item The dimension of $Z_w$ is given by $\dim(Z_w)=n^2-\ell(w)$.
\end{enumerate}
\end{proposition}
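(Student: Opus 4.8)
The unifying idea is to realize the variety underlying $Z_w$ as the closure in $M_n$ of a single orbit for the action of $B^-\times B$ on $M_n$ given by $(b^-,b)\cdot M=b^-Mb^{-1}$, and to read off parts (2)--(4) at the level of varieties from the geometry of that orbit. The one genuinely ring-theoretic point, the reducedness in part (1), is the crux.

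First I would set up the orbit picture. Row operations by $B^-$ and column operations by $B$ bring any matrix to a partial permutation matrix, distinct partial permutation matrices lie in distinct orbits, and two matrices are $B^-\times B$-conjugate exactly when all of their northwest $i\times j$ submatrices have equal rank; thus the $r_{ij}$ form a complete invariant, the orbits are locally closed, and an orbit closure is obtained by relaxing the rank bounds. Since $w\in S_n$ is invertible, its permutation matrix lies in $G$ with orbit $B^-wB$, and the set-theoretic description $V(I_w)=\{M:\operatorname{rank}M_{[i],[j]}\le r_{ij}(w)\ \forall i,j\}$ identifies $V(I_w)$ with $\overline{B^-wB}$ as a variety. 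Because $B^-\times B$ is irreducible and $B^-wB$ is its image under a morphism, $V(I_w)$ is irreducible; since the dense orbit $B^-wB$ lies in $G$, the set $V(I_w)\cap G$ is dense in $V(I_w)$, so $V(I_w)=\overline{V(I_w)\cap G}$, which is part (3). Applying $\pi$ and using that the dense subset $B^-wB$ of $V(I_w)\cap G$ maps onto $B^-wB/B$ gives $\pi(V(I_w)\cap G)=\overline{B^-wB/B}$, the opposite Schubert variety $w_0Y_{w_0w}$, which is part (2). For part (4), the stabilizer of the permutation matrix of $w$ under $B^-\times B$ is $\{(wbw^{-1},b):b\in B,\ wbw^{-1}\in B^-\}$, parametrized by the diagonal of $b$ together with the entries $b_{ij}$, $i<j$, for which $(i,j)$ is an inversion of $w$; hence $\dim\operatorname{Stab}=n+\ell(w)$, and $\dim Z_w=\dim(B^-\times B)-\dim\operatorname{Stab}=(n^2+n)-(n+\ell(w))=n^2-\ell(w)$.

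The remaining and harder point is the word ``reduced'' in (1): that the Fulton minors cut out the full radical ideal of $Z_w$, equivalently, given the irreducibility just established, that $I_w$ is prime. I would prove this by a Gröbner degeneration in the style of Knutson--Miller. Fix an antidiagonal term order on $S$ and show that the Fulton minors generating $I_w$ form a Gröbner basis; then $\operatorname{in}(I_w)$ is the squarefree monomial ideal generated by their antidiagonal leading terms. A squarefree monomial ideal is radical, so $S/\operatorname{in}(I_w)$ is reduced, and the implication ``$\operatorname{in}(I_w)$ radical $\Rightarrow I_w$ radical'' is a standard consequence of the flat degeneration; combined with irreducibility this shows $I_w$ is prime, so $Z_w=\Spec S/I_w$ is reduced. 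The main obstacle is verifying the Gröbner basis property, that every $S$-polynomial of a pair of generating minors reduces to zero; this is where the combinatorics of the rank conditions does the real work, and it is exactly the place where the northwest/southeast ``triangular'' bookkeeping used in Proposition~\ref{proposition: reduced, cohen-macaulay} reappears. As an alternative, one can follow Fulton's original inductive argument, adjoining one rank condition at a time and showing that the ideal either becomes the unit ideal or stays prime, using that intersecting a prime with a ``northwest'' determinantal condition behaves like a generic determinantal variety. Either way, once (1) is secured, parts (2)--(4) are exactly the orbit bookkeeping described above.
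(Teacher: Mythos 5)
The paper does not actually prove this proposition; immediately before the statement it writes ``Fulton showed the following~\cite{Ful92}'' and cites Fulton's paper (with Theorem~\ref{theorem: main Schubert theorem} later citing Knutson--Miller for the Cohen--Macaulayness and $K$-polynomial parts). So there is no in-paper proof to compare yours against. That said, your sketch is the standard route and its outline is correct: the $B^-\times B$-orbit picture gives (2), (3), and (4), and a Gr\"obner degeneration (Knutson--Miller) or Fulton's one-rank-condition-at-a-time induction gives the primality/reducedness in (1), which, as you rightly flag, is where the genuine work is. Your stabilizer computation for (4) is right ($\dim\mathrm{Stab}=n+\ell(w)$, $\dim(B^-\times B)=n^2+n$), and the deduction in (3) that the dense orbit lies in $G$ is the right reason closure-from-$G$ recovers all of $Z_w$.

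Two places where you are leaning on facts you do not verify and should be aware of. First, the assertion that ``an orbit closure is obtained by relaxing the rank bounds'' --- i.e.\ that $\overline{B^-wB}$ equals the set of matrices whose northwest ranks are $\le r_{ij}(w)$ --- is a Bruhat-order statement about partial permutation matrices that itself requires an argument (containment one way is immediate by semicontinuity of rank; the reverse inclusion is where the combinatorics enters). Second, the parenthetical suggestion that the Gr\"obner-basis verification reuses the ``triangular bookkeeping'' from Proposition~\ref{proposition: reduced, cohen-macaulay} conflates two quite different arguments: that proposition is an averaging/invariant-theoretic argument about $B$-stable ideals in characteristic zero, while the Knutson--Miller Gr\"obner result rests on pipe dreams and subword complexes (or, in Fulton's original treatment, a careful ladder-determinantal induction). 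The structure of your sketch is sound, but a full write-up would need to either import those results by citation --- as the paper itself does --- or carry out one of the two strategies for (1) in detail.
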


Not all $i\times j$ submatrices are needed to define the ideal $I_w$.  More precisely, for each $w$
the {\em essential set} $\mathcal{E}(w)$ is
$$\mathcal{E}(w)=\{(i,j)\mid w(j)>i\geq w(j+1), w^{-1}(i)>j\geq w^{-1}(i+1)\}.$$
Fulton also proved the following~\cite{Ful92}. 
\begin{lemma} \label{lemma: essential set decomposition}
The ideal $I_w$ can be written as $I_w=\sum_ {(i,j)\in\mathcal{E}(w)} I_{w,i,j}$.
\end{lemma}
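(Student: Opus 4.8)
\noindent\emph{Proof plan.}
The inclusion $\sum_{(i,j)\in\mathcal{E}(w)}I_{w,i,j}\subseteq I_w$ is immediate from the definition of $I_w$, so the plan is to prove the reverse, namely that $I_{w,i,j}\subseteq\sum_{(i',j')\in\mathcal{E}(w)}I_{w,i',j'}$ for every pair $(i,j)$; summing over all $(i,j)$ then gives the lemma. First I would dispose of the trivial case: if $r_{ij}(w)=\min(i,j)$, then a generic $i\times j$ matrix has no square submatrix of size $r_{ij}(w)+1$, so $I_{w,i,j}=0$ and there is nothing to prove. Hence one may assume $r_{ij}(w)<\min(i,j)$, which in particular forces $i<n$ and $j<n$ because $r_{in}(w)=i$ and $r_{nj}(w)=j$.

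The heart of the argument is a set of four elementary ``reduction'' containments, each valid for a box $(i,j)$ with $r_{ij}(w)<\min(i,j)$. Using the identities $r_{i,j+1}(w)-r_{ij}(w)=[w(j+1)\le i]$ and $r_{i+1,j}(w)-r_{ij}(w)=[w^{-1}(i+1)\le j]$, I would establish: (1) if $w(j+1)>i$ then $I_{w,i,j}\subseteq I_{w,i,j+1}$; (2) if $w^{-1}(i+1)>j$ then $I_{w,i,j}\subseteq I_{w,i+1,j}$; (3) if $w(j)\le i$ then $I_{w,i,j}\subseteq I_{w,i,j-1}$; (4) if $w^{-1}(i)\le j$ then $I_{w,i,j}\subseteq I_{w,i-1,j}$. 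Containments (1) and (2) are essentially formal: when the rank does not change, the generating minor sizes of the two ideals agree, and every generator of the smaller ideal (built from the first $i$ rows and first $j$ columns of a generic matrix) is literally one of the generators of the larger. Containments (3) and (4) come from Laplace expansion along the last column (respectively row) of the northwest $i\times j$ submatrix: expanding a generating $(r_{ij}(w)+1)$-minor of $I_{w,i,j}$ produces a signed sum of terms $z_{aj}\cdot d$ with $d$ an $r_{ij}(w)$-minor of the northwest $i\times(j-1)$ block, and the rank drop $r_{i,j-1}(w)=r_{ij}(w)-1$ (equivalently $w(j)\le i$) is exactly what makes those $d$ generators of $I_{w,i,j-1}$ (a generating minor not meeting column $j$ is already a larger minor of that block, hence still in $I_{w,i,j-1}$).

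Next I would record the key dichotomy: for a box with $r_{ij}(w)<\min(i,j)$, none of (1)--(4) applies if and only if $w(j)>i\ge w(j+1)$ and $w^{-1}(i)>j\ge w^{-1}(i+1)$, i.e.\ if and only if $(i,j)\in\mathcal{E}(w)$. So starting from any box with $r_{ij}(w)<\min(i,j)$, iterating (1)--(4) produces a chain $I_{w,i,j}\subseteq I_{w,i_1,j_1}\subseteq I_{w,i_2,j_2}\subseteq\cdots$ that can only stall at a box of $\mathcal{E}(w)$. To see that it must stall, I would check two monotonicity facts: the condition $r_{\bullet\bullet}(w)<\min(\bullet,\bullet)$ is preserved by all four moves (so the chain stays in the region where the reductions are valid and never collapses to the trivial case), and $r_{\bullet\bullet}(w)$ is weakly decreasing along the chain and drops strictly at each use of (3) or (4), so (3) and (4) are used at most $r_{ij}(w)\le n$ times altogether, while between two consecutive such uses only (1) and (2) occur, each strictly increasing the quantity $i'+j'\le 2n$. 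Hence the chain is finite and ends at some $(i',j')\in\mathcal{E}(w)$, giving $I_{w,i,j}\subseteq I_{w,i',j'}\subseteq\sum_{(a,b)\in\mathcal{E}(w)}I_{w,a,b}$.

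I expect the main obstacle to be the two ``shrinking'' containments (3) and (4): one has to track the minor sizes carefully through the Laplace expansion to confirm that the residual minors are precisely the generators of $I_{w,i,j-1}$ (respectively $I_{w,i-1,j}$), which is where the exact hypotheses $w(j)\le i$ and $w^{-1}(i)\le j$ get forced rather than merely sufficient. A secondary point needing care is the invariance of $r_{\bullet\bullet}(w)<\min(\bullet,\bullet)$ under all four moves; without it the termination argument would not rule out the chain drifting toward the boundary of the matrix instead of halting at an essential box. Everything else is bookkeeping in the one-line notation for $w$ and $w^{-1}$.
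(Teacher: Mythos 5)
The paper does not actually prove this lemma; it attributes the statement to Fulton~\cite{Ful92} and cites it without argument, so there is no in-paper proof to compare yours against. Your proposal is a correct and self-contained reconstruction of the standard reduction argument. The four moves are the right ones, the rank identities $r_{i,j+1}(w)-r_{ij}(w)=[w(j+1)\le i]$ and $r_{i+1,j}(w)-r_{ij}(w)=[w^{-1}(i+1)\le j]$ do exactly the work you indicate, the Laplace-expansion step for moves (3) and (4) correctly handles both the case where the minor meets the deleted column/row (producing $r_{ij}(w)$-minors, which are generators of $I_{w,i,j-1}$ precisely because of the rank drop) and the case where it does not (a larger minor of the smaller block, hence still in the ideal). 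The dichotomy ``none of (1)--(4) applies $\Leftrightarrow (i,j)\in\mathcal{E}(w)$'' is exactly the negation of the four defining inequalities of $\mathcal{E}(w)$, and your termination argument (rank weakly decreasing and strictly decreasing at (3),(4); $i'+j'$ strictly increasing and bounded between such drops; the region $r<\min(i,j)$ preserved so no drift to the boundary) is sound. Two cosmetic points: the bound on $i'+j'$ is $\le 2n-2$ rather than $2n$ since you have already shown $i',j'<n$, and one should note explicitly that the initial reduction to $i,j<n$ (via $r_{in}=i$, $r_{nj}=j$) also shows that the undefined quantities $w(n+1)$, $w^{-1}(n+1)$ never arise in checking membership in $\mathcal{E}(w)$. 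Neither affects correctness.
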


The multidegrees (or equivalently equivariant cohomology classes) of the matrix Schubert varieties are given by {\em Schubert polynomials} as follows.  Denote both the permutation that switches $i$ and $i+1$ and the operator on  $\mathbb{Z}[x_1,\ldots,x_n]$ that switches $x_i$ and $x_{i+1}$ by $s_i$.  Let $\delta_i$ be the operator on $\mathbb{Z}[x_1,\ldots,x_n]$ with
$$\delta_i(f)= \frac{f - s_i f}{x_i - x_{i+1}},$$
 and let $w_0\in S_n$ be the longest permutation, given explicitly by $w_0(i)=n+1-i$ for all $i$.
The {\em Schubert polynomial} $\mathfrak{S}_w$ is defined recursively by $\mathfrak{S}_{ws_i}=\delta_i(\mathfrak{S}_w)$ whenever $ws_i<w$, starting from the highest-degree Schubert polynomial $\mathfrak{S}_{w_0}=\prod_{i=1}^{n-1} x_i^{n-i}$.  The {\em modified Grothendieck polynomial} $\hat{\mathfrak{G}}_w$ is the $K$-polynomial (or equivalently $K$-class) of the matrix Schubert 
variety.  It is defined similarly using the Demazure operator 
$\overline{\delta_i}(f)=-\delta_i(x_{i+1}f)$ and the recursion $\hat{\mathfrak{G}}_{ws_i}=\overline{\delta}_i(\hat{\mathfrak{G}}_w)$ whenever $ws_i<w,$ starting from $\hat{\mathfrak{G}}_{w_0}=\prod_{i=1}^{n-1} (1-x_i)^{n-i}$.\footnote{Knutson and Miller defined the modified Grothendieck polynomials in this way to agree with K-polynomials~\cite{KM05}.  The usual Grothendieck polynomial $\mathfrak{G}_w$ can be recovered from the modified Grothendieck polynomial $\hat{\mathfrak{G}}_w$ by substituting $\mathfrak{G}_w(x_1,\ldots,x_n)=\hat{\mathfrak{G}}_w(1-x_1,\ldots,1-x_n)$.  This substitution commutes with the substitution in our main theorem, so our statements are true for either version of the Grothendieck polynomials (as long as one is consistent).}

Recall that the grading on $S$ in the group $\mathbb{Z}^n$ with generators $x_1,\ldots,x_n$ is defined by $\deg(z_{ij})=x_j$.  We give one of Knutson and Miller's main results~\cite{KM05} (though Feher and Rimanyi proved similar results~\cite{FehRim03}). 

\begin{theorem} \label{theorem: main Schubert theorem}  Fix a permutation $w \in S_n$ and its matrix Schubert variety $Z_w$.
\begin{enumerate}
\item The $K$-polynomial of $Z_w$ is $\hat{\mathfrak{G}}_w$.
\item The multidegree of $Z_w$ is $\mathfrak{S}_w$.
\end{enumerate}
\end{theorem}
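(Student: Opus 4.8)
The plan is to establish all three parts at once through a single Gr\"obner degeneration, in the style of Knutson and Miller. Fix an \emph{antidiagonal term order} $\prec$ on $S$ --- one that selects, as the leading term of each minor $d_{A,B}$, the product of the matrix entries along its antidiagonal. By Proposition~\ref{proposition: matrix Schubs}, $Z_w=\overline{Z_w\cap G}$ is irreducible of dimension $n^2-\ell(w)$, so $\codim(Z_w)=\ell(w)$. I would begin with the base case $w=w_0$. Here $r_{ij}(w_0)=\max(0,\,i+j-n)$, so $I_{w_0}$ is generated by the coordinate functions $z_{ij}$ with $i+j\le n$ and $S/I_{w_0}$ is a polynomial ring; in particular $Z_{w_0}$ is a complete intersection, hence Cohen--Macaulay. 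Its $K$-polynomial is $\prod_{i+j\le n}(1-x_j)$ (the product over pairs $(i,j)$ with $1\le i,j\le n$ and $i+j\le n$), which equals $\prod_{j=1}^{n-1}(1-x_j)^{n-j}=\hat{\mathfrak{G}}_{w_0}$; substituting $x_j\mapsto 1-x_j$ turns this into $\prod_{j=1}^{n-1}x_j^{n-j}$, which is homogeneous of degree $\binom{n}{2}=\codim(Z_{w_0})$, so the multidegree of $Z_{w_0}$ is $\prod_{j=1}^{n-1}x_j^{n-j}=\mathfrak{S}_{w_0}$. Thus the theorem holds for $w_0$.

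The combinatorial heart of the argument is the assertion that Fulton's generators of $I_w$ form a Gr\"obner basis for $\prec$. Granting this, $\textup{in}_\prec(I_w)$ is the squarefree monomial ideal generated by the antidiagonal terms of those minors, and I would identify it, following Knutson and Miller, with the Stanley--Reisner ideal of a simplicial complex $\Delta_w$, a subword complex attached to a reduced word for $w$. Two features of $\Delta_w$ are what I need. First, $\Delta_w$ is vertex-decomposable, hence shellable, hence Cohen--Macaulay, and it has the expected dimension $n^2-\ell(w)$. Second, its facets are the (reduced) pipe dreams of $w$, so computing the multigraded Hilbert series of $\Bbbk[\Delta_w]$ facet by facet along a shelling yields $\mathcal{K}(\Bbbk[\Delta_w])=\hat{\mathfrak{G}}_w$, whose lowest-degree part after $x_j\mapsto 1-x_j$ is $\mathfrak{S}_w$, the generating function of the reduced pipe dreams of $w$.

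It remains to transport these statements from $S/\textup{in}_\prec(I_w)$ back to $Z_w$. The Gr\"obner degeneration is a flat family over $\mathbb{A}^1$ whose general fiber is $Z_w$ and whose special fiber is $\Spec S/\textup{in}_\prec(I_w)$, and since multigraded Hilbert series are constant in flat families we obtain $\mathcal{K}(Z_w)=\mathcal{K}(\Bbbk[\Delta_w])=\hat{\mathfrak{G}}_w$, and hence the multidegree of $Z_w$ is $\mathfrak{S}_w$; this gives parts (1) and (2). For part (3), one always has $\dim(S/I_w)=\dim(S/\textup{in}_\prec I_w)$, while depth cannot drop under Gr\"obner degeneration: Betti numbers are upper semicontinuous, so $\textup{pd}(S/I_w)\le\textup{pd}(S/\textup{in}_\prec I_w)$, and Auslander--Buchsbaum gives $\textup{depth}(S/I_w)\ge\textup{depth}(S/\textup{in}_\prec I_w)$. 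Because the special fiber is Cohen--Macaulay its depth equals its dimension, so the same holds for $S/I_w$; that is, $Z_w$ is Cohen--Macaulay.

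The step I expect to be the main obstacle is the combinatorial input of the second paragraph: proving that Fulton's generators are a Gr\"obner basis for an antidiagonal order (an $S$-pair analysis among the determinantal generators), and then showing the resulting subword complex is vertex-decomposable with facets indexed by pipe dreams, so that its Hilbert series is $\hat{\mathfrak{G}}_w$. Everything else is standard: the base case, the flatness of Gr\"obner degenerations and constancy of Hilbert series, and the depth-versus-dimension comparison. As an alternative route to parts (1) and (2) that avoids the explicit facet enumeration, one can argue recursively from the case $w=w_0$: the right action of the minimal parabolic $P_i$ carries $Z_w$ to $Z_{ws_i}$ whenever $ws_i<w$, with an attendant $\mathbb{P}^1$-bundle relating the two, and this yields $\mathcal{K}(Z_{ws_i})=\overline{\delta}_i\,\mathcal{K}(Z_w)$; since these Demazure operators satisfy the braid relations, that identifies $\mathcal{K}(Z_w)$ with $\hat{\mathfrak{G}}_w$ by downward induction on $\ell(w)$, and taking lowest-degree parts gives $\mathfrak{S}_w$.
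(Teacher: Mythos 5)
This theorem is not proved in the paper; it is quoted directly from Knutson--Miller \cite{KM05}, with the Cohen--Macaulay statement attributed originally to Fulton \cite{Ful92}, so there is no internal proof to compare your attempt against. That said, your sketch is a faithful and essentially correct account of the Knutson--Miller argument: the antidiagonal Gr\"obner degeneration, the identification of the initial ideal as the Stanley--Reisner ideal of a vertex-decomposable subword complex with facets the reduced pipe dreams, the transfer of the Hilbert series and multidegree back along the flat degeneration, and the transfer of Cohen--Macaulayness via upper semicontinuity of Betti numbers together with Auslander--Buchsbaum; your alternative recursive route through the minimal-parabolic action and Demazure operators is also the induction Knutson and Miller run. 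One small caveat: Knutson and Miller do not establish the Gr\"obner-basis property by a direct $S$-pair (Buchberger) calculation as your second paragraph suggests --- the determinantal $S$-pairs are intractable --- but rather by a ``Bruhat induction'' that verifies the Gr\"obner claim simultaneously with the degree and Hilbert-series computations using the subword-complex combinatorics, so the logical dependencies in the actual proof are more interlocked than your outline indicates.
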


By Propositions~\ref{proposition: relate class to K-polynomial}~and~\ref{proposition: matrix Schubs}, it follows that the $K^0$ and cohomology classes of the Schubert variety $Y_{w_0w}$ are represented by $\mathfrak{G}_w$ and $\mathfrak{S}_w$ respectively, recovering seminal results of Bernstein, Gelfand, and Gelfand, Demazure, and Lascoux~and~Sch\"utzenberger \cite{BGG73, Dem74, LS82a,LS82b}.  This new understanding of these classical results was one of the principal motivations of the work of Knutson and Miller.

\section{Regular intersections with Cohen-Macaulay schemes}

The proof of our main theorem is based on the following fact from commutative algebra.  It is clear to experts and seems broadly applicable to many situations, but we could not find it in the literature and so provide a detailed proof.

\begin{theorem} \label{theorem: main step}
Let $S=\Bbbk[z_1,\ldots,z_n]$ be a positively $\mathbb{Z}^d$-graded ring, $\ell_1,\ldots, \ell_a$ linear forms that are homogeneous with respect to the grading, and $R=S/\langle \ell_1,\ldots, \ell_a\rangle$.  Given a graded $S$-module $M$, let $\mathcal{H}_S(M)$ denote the Hilbert series of $M$ with respect to the grading and
$$\mathcal{K}_S(M)=\prod_{i=1}^n (1-\xvect^{\deg(z_i)}) \mathcal{H}_S(M)$$
the $K$-polynomial.  Similarly if $N$ is a graded $R$-module then let $\mathcal{H}_R(N)$ and $\mathcal{K}_R(N)$ denote its Hilbert series and $K$-polynomial with respect to $R$.

Let $M$ be a finitely generated Cohen-Macaulay $S$-module.  Suppose there is a (possibly empty) proper closed subscheme $A\subseteq \Spec R$ such that $\dim (M\otimes_S R)_{\mathfrak{p}}=\dim(M)-a$ for all $\mathfrak{p}\in \Supp (M\otimes_S R)\setminus A$.  Then
$$\mathcal{K}_S(M)=\mathcal{K}_R(M\otimes_S R) + f$$
where $f$ is an element supported on $A$.
%
\end{theorem}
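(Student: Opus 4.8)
The plan is to reduce the statement to the behavior of Koszul homology under a generic hyperplane section, applied one linear form at a time. Set $R_k = S/\langle \ell_1,\dots,\ell_k\rangle$, so $R_0 = S$ and $R_a = R$, and write $M_k = M\otimes_S R_k$. The key observation is that multiplication by $\ell_{k+1}$ on $M_k$ has a kernel and cokernel that govern the discrepancy: there is a short exact sequence of graded $S$-modules
$$0 \to (\ker \ell_{k+1} \text{ on } M_k)(-\deg \ell_{k+1}) \to M_k(-\deg \ell_{k+1}) \xrightarrow{\ell_{k+1}} M_k \to M_{k+1} \to 0.$$
Taking Hilbert series and multiplying by $\prod_i(1-\xvect^{\deg z_i})$, the additivity of $K$-polynomials on short exact sequences gives
$$\mathcal{K}_S(M_{k+1}) = (1-\xvect^{\deg\ell_{k+1}})\,\mathcal{K}_S(M_k) + \xvect^{\deg\ell_{k+1}}\,\mathcal{K}_S\bigl(\ker(\ell_{k+1}\mid M_k)\bigr).$$
The first term, iterated from $k=0$ to $a-1$, produces exactly $\prod_{j=1}^{a}(1-\xvect^{\deg\ell_j})\,\mathcal{K}_S(M)$, which is $\mathcal{K}_R(M\otimes R)$ once one checks the elementary identity relating $\mathcal{K}_S$ of an $R$-module (computed with the $S$-variables) to $\mathcal{K}_R$ of the same module (the extra factors $\prod(1-\xvect^{\deg\ell_j})$ are precisely the bookkeeping that converts one to the other). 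So $f$ is a $\mathbb{Z}$-linear combination, with monomial coefficients, of the $K$-polynomials of the successive kernels $\ker(\ell_{k+1}\mid M_k)$, and the whole game is to show each such kernel is supported on $A$.

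The heart of the argument is therefore: under the dimension hypothesis, for every $k$ the map $\ell_{k+1}$ is a nonzerodivisor on $M_k$ away from $A$, i.e. $\bigl(\ker(\ell_{k+1}\mid M_k)\bigr)_{\mathfrak p} = 0$ for $\mathfrak p\notin A$; equivalently $\mathrm{Supp}\,\ker(\ell_{k+1}\mid M_k)\subseteq A$. I would prove this by descending through the sequence of localizations and using the fact that $M$ is Cohen--Macaulay over $S$. Localize at a prime $\mathfrak p\in\mathrm{Supp}(M\otimes R)\setminus A$ (a prime not in $A$ but lying over such a prime of $\mathrm{Spec}\,R$ — here one must be slightly careful about what ``supported on $A$'' means for modules over the various $R_k$, but $A$ pulls back to a closed subscheme of each $\mathrm{Spec}\,R_k$ and of $\mathrm{Spec}\,S$ restricted to $V(\ell_1,\dots,\ell_k)$, and it is along that locus that we test). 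The hypothesis says $\dim M_\mathfrak p - a = \dim (M\otimes R)_\mathfrak p$. Since each hyperplane section can drop the dimension of the support by at most one, $\dim(M_k)_\mathfrak p = \dim M_\mathfrak p - k$ for all $k\le a$: the dimension drops by exactly one at every step. Now invoke that $M_\mathfrak p$ is Cohen--Macaulay (localization of Cohen--Macaulay is Cohen--Macaulay): a Cohen--Macaulay module on which a homogeneous element cuts down the dimension of the support by exactly one has that element as a nonzerodivisor (an element of a system of parameters for a Cohen--Macaulay module is a regular element), and modding out preserves Cohen--Macaulayness with depth dropping by one. Inducting on $k$: $(M_k)_\mathfrak p$ is Cohen--Macaulay, $\ell_{k+1}$ drops its support dimension by one, hence is a nonzerodivisor on $(M_k)_\mathfrak p$, hence $\bigl(\ker(\ell_{k+1}\mid M_k)\bigr)_\mathfrak p = 0$, and $(M_{k+1})_\mathfrak p = (M_k)_\mathfrak p/\ell_{k+1}$ is again Cohen--Macaulay. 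At $k=a$ this simultaneously proves that $(M\otimes R)_\mathfrak p$ is Cohen--Macaulay, giving the final sentence of the theorem, and along the way shows each kernel is supported on $A$, so $f$ is as claimed.

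The main obstacle, and the step I would spend the most care on, is making the support-on-$A$ statement precise and uniform across the tower $S = R_0, R_1, \dots, R_a = R$: the linear forms live in $S$, the closed set $A$ is given inside $\mathrm{Spec}\,R$, and one needs a single closed subset of $\mathrm{Spec}\,S$ (or compatible closed subsets of each $\mathrm{Spec}\,R_k$) off which every $\ell_{k+1}$ is regular on $(M_k)$. I would handle this by taking $A' \subseteq \mathrm{Spec}\,S$ to be the preimage of $A$ under $\mathrm{Spec}\,R \hookrightarrow \mathrm{Spec}\,S$ together with $V(\ell_1,\dots,\ell_k)$-components as needed, note that ``$K$-polynomial of a module supported on $A$'' is insensitive to whether we regard the module over $R$ or over any $R_k$ (since the grading and the relevant $1-\xvect^{\deg\ell_j}$ factors are fixed), and observe that the inductive dimension-count above only ever localizes at primes \emph{outside} $A'$, so the conclusion ``$\ker(\ell_{k+1}\mid M_k)$ is supported on $A'$'' is exactly what the argument delivers. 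A secondary technical point is the bookkeeping identity $\mathcal{K}_S(N) = \prod_{j=1}^{a}(1-\xvect^{\deg\ell_j})^{-1}\cdot(\text{something})$ — more precisely that for an $R$-module $N$ one has $\prod_{j}(1-\xvect^{\deg\ell_j})\,\mathcal{K}_S(N) $ is \emph{not} quite $\mathcal{K}_R(N)$ unless one is careful; in fact $\mathcal{K}_S(N) = \mathcal{K}_R(N)\cdot\prod_j(1-\xvect^{\deg\ell_j})$ holds essentially by definition once one uses a graded free resolution of $R$ over $S$ given by the Koszul complex on $\ell_1,\dots,\ell_a$, \emph{provided} the $\ell_j$ form a regular sequence on $S$ — which they need not globally, but this identity is purely about $R$ as an $S$-module and holds whenever the $\ell_j$ are a regular sequence on $S$; if they are not, one instead defines things so that the telescoping in the second paragraph is taken as the definition and no separate identity is needed. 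I would present it via the telescoping computation so as to sidestep any regularity hypothesis on the $\ell_j$ over $S$ itself.
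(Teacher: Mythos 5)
Your overall strategy is genuinely different from the paper's and rests on the same core commutative--algebra fact (a partial system of parameters on a Cohen--Macaulay module is a regular sequence, and modding out one element preserves Cohen--Macaulayness). The paper handles all $a$ linear forms at once: it takes a single graded free resolution $F_\bullet \to M$, notes that additivity of $K$-polynomials over $0\to F_c\to\cdots\to F_0\to M\to 0$ and over its tensor with $R$ gives $\mathcal{K}_S(M) = \sum_j (-1)^j \mathcal{K}_R(\Tor_j^S(R,M))$, and then identifies the error $f$ with $\sum_{j\geq 1}(-1)^j\mathcal{K}_R(\Tor_j^S(R,M))$. You instead peel off one $\ell_{k+1}$ at a time, using the four-term exact sequence built from multiplication by $\ell_{k+1}$ on $M_k$, so your error term is $-\sum_{k}\mathcal{K}_{R_{k+1}}\bigl(\ker(\ell_{k+1}\mid M_k)\bigr)$. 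Both routes then use the same localization argument on $\mathfrak p\in\Supp(M\otimes R)\setminus A$. Your telescoping route is more elementary (no Tor, no choice of free resolution) and is a valid alternative setup for the first displayed identity.

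However, there is a genuine gap precisely at the step you flagged as ``bookkeeping,'' and I do not think it can be waved off. The paper's error modules $\Tor_j^S(R,M)$ are $R$-modules by construction, so their support lies inside $\Spec R$ automatically; to prove $\Supp\Tor_j\subseteq A$ it really does suffice to check primes in $\Supp(M\otimes R)\setminus A$. Your error modules $K_k=\ker(\ell_{k+1}\mid M_k)$ are only $R_{k+1}$-modules, where $R_{k+1}=S/(\ell_1,\ldots,\ell_{k+1})$: a priori $\ell_{k+2},\ldots,\ell_a$ can act nontrivially on $K_k$, so $\Supp K_k$ lives in $\Spec R_{k+1}$, which strictly contains $\Spec R$. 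Your localization argument shows $(K_k)_\mathfrak p = 0$ for $\mathfrak p\in\Supp(M\otimes R)\setminus A$, but it says nothing about a prime $\mathfrak p$ with $\ell_1,\ldots,\ell_{k+1}\in\mathfrak p$ while $\ell_{k+2}\notin\mathfrak p$; such a $\mathfrak p$ is not in $\Spec R$, hence not in $A$, yet could carry nonzero $K_k$. Concretely, if some associated prime $\mathfrak q$ of the Cohen--Macaulay module $M$ contains $\ell_1$ but not $\ell_2$, then $K_0=\ker(\ell_1\mid M)$ has $V(\mathfrak q)$ among its components and is therefore supported away from $\Spec R$; nothing in the dimension hypothesis forbids this (the hypothesis constrains only primes of $\Spec R$ off $A$). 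Replacing $A$ by a larger $A'\subseteq\Spec S$ ``including $V(\ell_1,\ldots,\ell_k)$-components as needed,'' as you suggest, does not repair this: ``supported on $A'$'' no longer implies ``supported on $A\subseteq\Spec R$,'' which is what the theorem asserts and what the application requires (the error class must vanish under $K_T^0(M_n)\to K_T^0(G)$). To make your one-form-at-a-time approach work you would need an additional argument that each $K_k$ is in fact annihilated by a power of every $\ell_i$, or else you should switch to computing the error all at once via Tor against $R$ as the paper does, so that the error modules are manifestly $R$-modules.
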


\begin{proof}
Let
$$0\rightarrow F_c \rightarrow F_{c-1} \rightarrow \cdots \rightarrow F_0 \rightarrow M \rightarrow 0$$
be a free resolution of $M$.  Then the $K$-polynomial of $M$ is given by
$$\mathcal{K}_S(M)=\prod_{i=1}^n (1-\xvect^{\deg(z_i)})\mathcal{H}_S(M)=\prod_{i=1}^n (1-\xvect^{\deg(z_i)})\sum_{j=0}^{c} (-1)^j \mathcal{H}_S(F_j)=\sum_{j=0}^c (-1)^j \mathcal{K}_S(F_j).$$
On the other hand, tensoring the above free resolution by $R$ gives
$$\sum_{j=0}^c (-1)^j \mathcal{H}_S(\Tor_j^S (R, M)) = \sum_{j=0}^c (-1)^j \mathcal{H}_S(R\otimes F_j),$$
so
$$\sum_{j=0}^c (-1)^j \mathcal{K}_R(\Tor_j^S (R, M)) = \sum_{j=0}^c (-1)^j \mathcal{K}_R(R\otimes F_j),$$
since for any $R$-module $N$ (which can also be considered as an $S$-module),
$$\mathcal{K}_R(N)=\frac{\prod_{i=1}^n (1-\xvect^{\deg(z_i)})}{\prod_{j=1}^a (1-\xvect^{\deg(\ell_a)})}\mathcal{H}_S(N).$$
As $F_j$ is a finitely generated free $S$-module for all $j$,
$$\mathcal{K}_R(R\otimes F_j)=\mathcal{K}_S(F_j).$$
Hence,
$$\sum_{j=0}^c (-1)^j \mathcal{K}_R(\Tor_j^S (R, M)) = \mathcal{K}_S(M).$$

Suppose $\langle\ell_1,\ldots,\ell_a\rangle\subseteq \mathfrak{p}$ and $\mathfrak{p}\in \Supp(M\otimes R)\setminus A$.  Then since 
\[\dim(M/(\ell_1,\ldots,\ell_a)M)_\mathfrak{p}=\dim(M)_\mathfrak{p}-a,\]
the linear forms $\ell_1,\ldots,\ell_a$ are part of a system of parameters on $M_\mathfrak{p}$.  Moreover  $\ell_1,\ldots,\ell_a$ form a regular sequence on $M_{\mathfrak{p}}$ since $M$ is Cohen--Macaulay~\cite[Theorem 2.1.2]{BH93}. 

Furthermore, since $\ell_1,\ldots,\ell_a$ is a regular sequence on every module of the augmented free resolution of $M_\mathfrak{p}$, tensoring by $R$ is exact on this augmented free resolution~\cite[Theorem 1.1.5]{BH93}.  Thus $\Tor_j^S(R,M)_\mathfrak{p}=0$ for all $j>0$, and so $\Supp \Tor_j^S(R,M)\subseteq A$ for $j>0$.  Therefore,
$$\mathcal{K}_S(M)=\mathcal{K}_R(R\otimes M)+\sum_{j=1}^c (-1)^j \mathcal{K}_R(\Tor_j^S (R, M)) = \mathcal{K}_R(M\otimes R) + f,$$
where $f$ is a signed sum of $K$-polynomials for $R$-modules supported on $A$.
\end{proof}

%

\section{Hessenberg varieties and their equations}

This section defines Hessenberg varieties and gives several results about their equations.  We focus on type $A_n$ but start in a more general setting.  Unfortunately, there are two main definitions in the literature: the first is Lie-theoretic and uses the adjoint action, while the second is more geometric and uses the interpretation of flags as nested subspaces.  Proposition~\ref{prop: Hessenberg description} confirms that these two coincide as sets, which was already known.  The main goal of this section is to  extend Proposition~\ref{prop: Hessenberg description} to a scheme-theoretic result.  This leads us naturally to define the ideals that are the key tools in the proof of our main result (given in the last section of the paper). 

Let $G$ be a semisimple reductive algebraic group over $\Bbbk$ with a fixed Borel subgroup $B$, let $\mathfrak{g}$ be the Lie algebra of $G$, and let $\mathfrak{b}\subset\mathfrak{g}$ be the Lie subalgebra corresponding to $B$.  A {\em Hessenberg space} $H$ is a subspace of $\mathfrak{g}$ containing $\mathfrak{b}$ that is preserved under the adjoint action of $\mathfrak{b}$. In other words $H$ is a subspace with $\mathfrak{b}\subseteq H\subseteq\mathfrak{g}$ and $[b,h]\in H$ for each $b\in\mathfrak{b}$ and $h\in H$.  Given a Hessenberg space $H$ and an element $X\in\mathfrak{g}$, the {\em Hessenberg variety} is the subscheme $\Y_{X,H}\subseteq G/B$ defined by

$$\Y_{X,H}=\{gB\in G/B \mid \Ad(g^{-1})\cdot X\in H\}.$$

There is some confusion in the literature over whether the Hessenberg variety is the subscheme defined by the equations stating that $\Ad(g^{-1})\cdot X\in H$ or the reduced variety supporting this subscheme.  The two definitions do not always coincide~\cite[Theorem 7.6]{AT10}, \cite[Remark 15]{BC04}, though Abe, Fujita, and Zeng prove that they are the same when $X$ is regular and $H$ contains all the negative simple roots (because $\Y_{X,H}$ is reduced in this case)~\cite[Prop. 3.6]{AFZ18}. We use the first definition, which may be more widely accepted, and in any case is the definition that makes our theorems true.  

We now return to $G=GL_n(\Bbbk)$, where there is a slightly different definition.   We will show these two definitions define the same object, even scheme-theoretically.  In this case we can take an element $g\in G$ to be an $n\times n$ invertible matrix.  Let $v_1,\ldots,v_n\in\Bbbk^n$ denote the columns of $g$ from left to right.  An element $g\in GL_n(\Bbbk)$ defines a flag $F_\bullet(g)$ by letting $F_j(g)=\langle v_1,\ldots, v_j\rangle$.
A {\em Hessenberg function} $h:\{1,\ldots,n\}\rightarrow\{1,\ldots,n\}$ is one that satisfies $j\leq h(j)$ for all $j$ with $1\leq j\leq n$ and $h(j)\leq h(j+1)$ for all $j$ with $1\leq j\leq n-1$.  Furthermore, we can consider $X\in\mathfrak{g}$ as a (not necessarily invertible) $n\times n$ matrix.  One can now define
$$\Y^\prime_{X,h}=\{gB\in G/B \mid XF_j(g)\subseteq F_{h(j)}(g) \,\,\forall j\}.$$

Again, one can either define the Hessenberg variety as the subscheme given by the determinantal equations stating these inclusions (or equivalently as a degeneracy locus of a particular map of line bundles) or as the reduced variety supporting this subscheme.

Given a Hessenberg function $h$, the associated Hessenberg space is 
$$H_h=\mathfrak{b}\oplus \bigoplus_{j<i\leq h(j)} \mathfrak{g}_{e_i-e_j}.$$  
Considering $\mathfrak{g}$ as the space of all $n\times n$ matrices, the Hessenberg space becomes
$$H_h=\{x\in\mathfrak{g}\mid x_{ij}=0 \mbox{ for all } i>h(j)\}.$$

The following proposition is well known, but we could not find an explicit proof in the literature.

\begin{proposition} \label{prop: Hessenberg description}
Let $g\in GL_n(\Bbbk)$, $X\in\mathfrak{g}$, and $h$ a Hessenberg function.  Then
$\Y_{X,H_h}=\Y^\prime_{X,h}$ as sets.
\end{proposition}

\begin{proof}
In $GL_n(\Bbbk)$ the adjoint representation is given by conjugation, meaning $gB \in \Y_{X,H_h}$ if and only if $g^{-1}Xg \in H_h$.  The latter is equivalent to the condition that $Xg \in gH_h$ by left multiplication.  Examining this condition column-by-column gives $Xv_j \in \langle v_1, v_2, \ldots, v_{h(j)} \rangle$ for each $j$.  Since $h$ is nondecreasing this is equivalent to $XF_j(g) \subseteq F_{h(j)}(g)$ as desired.
\end{proof}

The previous statements are in the literature.  The main work of this section is to show that $\Y_{X,H_h}=\Y^\prime_{X,h}$ as {\em subschemes} of $G/B$ by showing that the ideals defining $\pi^{-1}(\Y_{X,H_h})$ and $\pi^{-1}(\Y^\prime_{X,h})$ coincide.  The statement then follows by the correspondence between $B$-invariant subschemes of $G$ and subschemes of $G/B$ (which can be shown, for example, by considering the universal property of quotients).  We will first construct the ideal defining $\pi^{-1}(\Y_{X,H_h})$, then the ideal defining $\pi^{-1}(\Y^\prime_{X,h})$, and finally show that the ideals coincide in the coordinate ring of $G$ using a determinantal relation.

Let $R=\Bbbk[z_{ij}]$ for $1\leq i,j\leq n$ be the coordinate ring for the space $M_n$ of $n\times n$ matrices, so that the $j$-th column is $v_j=\sum_{i=1}^n z_{ij}e_i$ where $e_i$ is the $i$-th standard basis vector in $\Bbbk^n$.  Then $G=\Spec(R[d^{-1}])$, where $d=d_{(1,\ldots,n),(v_1,\ldots,v_n)}$ is the determinant of the generic matrix.
We now define an ideal $I_{X,H_h}\subseteq R$ generated by the explicit equations for the condition that $\Ad(g^{-1})\cdot X=g^{-1}Xg\in H_h$.  Hence we will have $\pi^{-1}(\Y_{X,H_h})=\Spec(R/I_{X,H_h})\cap G$ by definition.

By Cramer's Rule, the $(i,k)$-th entry of $g^{-1}$ is  
\[(-1)^{i+k}d_{(1,\ldots,k-1,k+1,\ldots,n),(v_1,\ldots,v_{i-1},v_{i+1},\ldots,v_n)}/d.\]  
To obtain the $(i,j)$-th entry of $g^{-1}Xg$, we insert $Xv_j$ into this determinant by Laplace expansion, resulting in $d_{(1,\ldots,n),(v_1,\ldots,v_{i-1},Xv_j,v_{i+1},\ldots,v_n)}/d$.  If we define
$$I_{X,H_h}=\langle d_{(1,\ldots,n),(v_1,\ldots,v_{i-1},Xv_j,v_{i+1},\ldots,v_n)}\mid i>h(j)\rangle,$$
then $\pi^{-1}(\Y_{X,h})=\Spec(R/I_{X,H_h})\cap G$.

To define an ideal cutting out $\pi^{-1}(\Y^\prime_{X,h})$, note that the condition $XF_j(g)\subseteq F_{h(j)}(g)$ is equivalent to requiring that the rank of $\{Xv_1,\ldots,Xv_j,v_1,\ldots,v_{h(j)}\}$ be exactly $h(j)$, which in turn can be expressed in equations by insisting that the size $h(j)+1$ minors of the matrix with $\{Xv_1,\ldots,Xv_j,v_1,\ldots,v_{h(j)}\}$ as the columns all vanish.  Hence, for any $j$ and $r$, let
$$J_{X,j,r}=\langle d_{R,C} \mid R\subseteq\{1,\ldots,n\}, C\subseteq \{Xv_1,\ldots,Xv_j,v_1,\ldots,v_r\}, \#R=\#C=r+1\rangle,$$ and define
$$J_{X,h}=\sum_{j=1}^n J_{X,j,h(j)}.$$
Then we have $\pi^{-1}(\Y^\prime_{X,h})=\Spec(R/J_{X,h})\cap G$, again by definition.

We are almost ready to show that $I_{X,H_h}$ and $J_{X,h}$ agree when considered as ideals in the coordinate ring of $G$.  The last tool we need is the following, known as the Pl\"ucker relation.  We provide a proof since we need it in a slightly more general form than usual; our proof is essentially that of~\cite[Lemma 8.1.2]{Ful97}.

\begin{lemma}  \label{lemma: Plucker}

Let $u_1,\ldots,u_m,v_1,\ldots, v_n\in \Bbbk^n$ and let $\phi: \Bbbk^n\rightarrow\Bbbk^m$ be a linear map.
Let $M$ be the $m\times m$ matrix with column vectors $\phi(u_1),\ldots,\phi(u_m)$ and $N$ the $n\times n$ matrix with column vectors $v_1,\ldots,v_n$.  Fix $k$ with $1\leq k\leq m$, and let $M_i$ be the matrix obtained from $M$ by replacing $\phi(u_k)$ with $\phi(v_i)$, while $N_i$ is the matrix obtained from $N$ by replacing $v_i$ with $u_k$.  Then
$$\det(M)\det(N)=\sum_{i=1}^n \det(M_i)\det(N_i).$$
\end{lemma}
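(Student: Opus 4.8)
The plan is to follow the classical argument (compare~\cite[Lemma 8.1.2]{Ful92}): reduce to the case in which $v_1,\ldots,v_n$ form a basis of $\Bbbk^n$, and then exploit multilinearity and the alternating property of the determinant together with the linearity of $\phi$.

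First I would carry out the reduction. Fix $\phi$ and $u_1,\ldots,u_m$, and regard the $n^2$ coordinates of $v_1,\ldots,v_n$ as indeterminates. Both sides of the asserted identity are then elements of the polynomial ring $\Bbbk[\{(v_i)_j\}]$, which is an integral domain, so they are equal if and only if they are equal in its fraction field $L$. Over $L$ the generic matrix $N$ is invertible, since $\det N$ is a nonzero polynomial; hence $v_1,\ldots,v_n$ is a basis of $L^n$, and it suffices to prove the identity in that situation, after which the general case follows by specialization. (Phrasing the reduction this way avoids any appeal to density of the nonvanishing locus of $\det N$, which would be problematic over a finite field.)

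So assume $v_1,\ldots,v_n$ is a basis and write $u_k=\sum_{i=1}^n a_i v_i$. There are two computations. On the $N$ side, $N_i$ is obtained from $N$ by replacing its $i$-th column $v_i$ with $u_k=\sum_j a_j v_j$; expanding the determinant by linearity in that column and discarding the terms with $j\neq i$, each of which has a repeated column, gives $\det(N_i)=a_i\det(N)$. On the $M$ side, $M_i$ is obtained from $M$ by placing $\phi(v_i)$ in the fixed $k$-th column, so expanding by linearity in that column and then using linearity of $\phi$ together with $\sum_i a_i v_i=u_k$ gives $\sum_{i=1}^n a_i\det(M_i)=\det(M)$ (namely the determinant of $M$ with its $k$-th column replaced by $\sum_i a_i\phi(v_i)=\phi(u_k)$, which is just $M$ itself). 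Combining the two, $\sum_{i=1}^n \det(M_i)\det(N_i)=\det(N)\sum_{i=1}^n a_i\det(M_i)=\det(N)\det(M)$, as claimed.

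The argument has no serious obstacle; the one point that needs care is the reduction step, to keep it valid over an arbitrary (possibly finite) field, which the fraction-field formulation handles. The only bookkeeping to watch is that the column being modified in $M$ is always the fixed column $k$ (the slot originally holding $\phi(u_k)$), whereas in $N$ it is the running column $i$.
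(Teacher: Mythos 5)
Your proof is correct, and it takes a genuinely different route from the paper. The paper (following Fulton) forms an auxiliary $(n+1)\times(n+1)$ matrix $\mathcal{N}$ whose first column is $u_k$ and whose added top row is $\det(M),\det(M_1),\ldots,\det(M_n)$; Laplace expansion along that row produces exactly $\det(M)\det(N)-\sum_i\det(M_i)\det(N_i)$, and Laplace expansion of the $M_i$'s along their $k$-th columns exhibits the top row of $\mathcal{N}$ as a linear combination of the remaining rows, so $\det(\mathcal{N})=0$. That argument is a direct, characteristic-free identity requiring no genericity hypothesis. You instead reduce to the case where $v_1,\ldots,v_n$ form a basis and then verify the identity by pure multilinearity: $\det(N_i)=a_i\det(N)$ and $\sum_i a_i\det(M_i)=\det(M)$ once $u_k=\sum_i a_iv_i$. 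Your two computations are right (the $j\neq i$ terms vanish by the repeated column, and linearity of $\phi$ reassembles $\phi(u_k)$ in column $k$ of $M$), and your reduction via the fraction field of $\Bbbk[\{(v_i)_j\}]$ is the correct way to justify genericity over an arbitrary field, since both sides are visibly polynomial in those coordinates. The trade-off is that your argument is arguably more transparent conceptually (it explains the identity as a change-of-basis statement) at the cost of the reduction step, whereas the paper's bordered-matrix computation applies verbatim without any detour through generic points.
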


\begin{proof}
Consider the $(n+1)\times (n+1)$ matrix $\mathcal{N}$ formed by taking $N$, adding $u_k$ as the first column, and adding a top row whose entries are $\det(M),\det(M_1),\ldots,\det(M_n)$.  By Laplace expansion along the top row
$$\det(\mathcal{N})=\det(M)\det(N)-\sum_{i=1}^n \det(M_i)\det(N_i).$$
(There are no signs in the sum since the sign in the Laplace expansion cancels with the sign needed to rearrange the columns of a minor of $\mathcal{N}$ into their order in $N_i$.)

On the other hand, we can show that $\mathcal{N}$ is singular.  Laplace expansion along the $k$-th column gives
$$\det(M_i)=\sum_{j=1}^m (-1)^j \phi_j(v_i)\det(M^j)$$
and
$$\det(M)=\sum_{j=1}^m (-1)^j \phi_j(u_k)\det(M^j),$$
where $\phi_j$ is $\phi$ composed with projection to the $j$-th entry and $M^j$ is the submatrix of $M$ formed by deleting the $j$-th row and the $k$-th column.  Hence the first row of $\mathcal{N}$ is a linear combination of the other rows since each $\phi_j$ is linear.  Therefore
$$\det(\mathcal{N})=\det(M)\det(N)-\sum_{i=1}^n \det(M_i)\det(N_i)=0,$$
and
$$\det(M)\det(N)=\sum_{i=1}^n \det(M_i)\det(N_i).  $$  
\end{proof}

We prove the next theorem by determinantal calculations.

\begin{theorem} \label{theorem: two Hessenberg ideals}
For any Hessenberg function $h$
$$I_{X,H_h}=J_{X,h}$$
as ideals in $\Bbbk[z_{ij}, d^{-1}]$, namely the coordinate ring of $GL_n(\Bbbk)$.
\end{theorem}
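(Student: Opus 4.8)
The plan is to prove the two inclusions $I_{X,H_h}\subseteq J_{X,h}$ and $J_{X,h}\subseteq I_{X,H_h}$ separately. The determinantal engine for both is Lemma~\ref{lemma: Plucker} (whose proof is by Laplace expansion and hence valid over any commutative ring, in particular over $\Bbbk[z_{ij}]$), together with the fact that over $GL_n$ the generic matrix $g$ with columns $v_1,\ldots,v_n$ is invertible with $\det g=d$ a unit.

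\emph{The inclusion $I_{X,H_h}\subseteq J_{X,h}$.} This already holds in $\Bbbk[z_{ij}]$. Fix a generator $\Delta=d_{(1,\ldots,n),(v_1,\ldots,v_{i-1},Xv_j,v_{i+1},\ldots,v_n)}$ of $I_{X,H_h}$, so $i>h(j)$; put $r=h(j)$. In the $n\times n$ matrix defining $\Delta$, the columns $v_1,\ldots,v_r$ occupy positions $1,\ldots,r$ and the column $Xv_j$ occupies position $i>r$. Expanding $\Delta$ by Laplace expansion along these $r+1$ columns writes it as a signed sum of products, each having as a factor an $(r+1)\times(r+1)$ minor of the $n\times(r+1)$ matrix with columns $v_1,\ldots,v_r,Xv_j$. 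Up to sign, every such minor is a generator $d_{R,C}$ of $J_{X,j,r}$ with $\#R=r+1$ and $C=\{v_1,\ldots,v_r,Xv_j\}\subseteq\{Xv_1,\ldots,Xv_j,v_1,\ldots,v_r\}$. Hence $\Delta\in J_{X,j,r}\subseteq J_{X,h}$.

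\emph{The inclusion $J_{X,h}\subseteq I_{X,H_h}$.} Here we use that $d$ is a unit. It suffices to show that each generator $d_{R,C}$ of $J_{X,j,h(j)}$ lies in $I_{X,H_h}$, where $\#R=\#C=h(j)+1$ and $C\subseteq\{Xv_1,\ldots,Xv_j,v_1,\ldots,v_{h(j)}\}$. Let $s$ be the number of columns of $C$ of the form $Xv_m$; since $\#C=h(j)+1$ exceeds the number $h(j)$ of allowed columns $v_k$, we have $s\geq 1$. We induct on $s$. Choose one column $Xv_m$ of $C$, so $m\leq j$, and apply Lemma~\ref{lemma: Plucker} with $\phi\colon\Bbbk^n\to\Bbbk^{h(j)+1}$ the coordinate projection onto the rows indexed by $R$, with the vectors $u_1,\ldots,u_{h(j)+1}$ the columns of $C$ ordered so that the chosen column $Xv_m$ is $u_k$, and with $v_1,\ldots,v_n$ the columns of $g$. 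The lemma then yields the identity
\[ d_{R,C}\cdot d \;=\; \sum_{p=1}^{n} d_{R,\,C_p}\cdot d_{(1,\ldots,n),(v_1,\ldots,v_{p-1},Xv_m,v_{p+1},\ldots,v_n)}, \]
where $C_p$ is $C$ with the column $Xv_m$ replaced by $v_p$. If $p>h(m)$, the second factor is a generator of $I_{X,H_h}$, so that summand lies in $I_{X,H_h}$. If $p\leq h(m)$, then $p\leq h(m)\leq h(j)$, so every column of $C_p$ lies in $\{Xv_1,\ldots,Xv_j,v_1,\ldots,v_{h(j)}\}$ and exactly $s-1$ of them are of $X$-type; thus $d_{R,C_p}$ is either $0$ (automatic when $s=1$, since then $C_p$ would be an $(h(j)+1)$-element subset of $\{v_1,\ldots,v_{h(j)}\}$) or a generator of $J_{X,j,h(j)}$ with $s-1$ columns of $X$-type, which lies in $I_{X,H_h}$ by the inductive hypothesis. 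Hence $d_{R,C}\cdot d\in I_{X,H_h}$, and since $d$ is a unit, $d_{R,C}\in I_{X,H_h}$, completing the induction.

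\emph{Main obstacle.} The content is elementary; the real care lies in the bookkeeping of the inductive step. One must verify that the identity produced by Lemma~\ref{lemma: Plucker} has exactly the displayed form — in particular that its ``$g$-side'' determinants are precisely the generators of $I_{X,H_h}$ when the replaced index exceeds $h(m)$ — and that each term with $p\leq h(m)$ is genuinely either $0$ or a generator of $J_{X,j,h(j)}$ with strictly fewer $X$-columns, which is where the monotonicity $h(m)\leq h(j)$ is used to keep $p$ in range. It is also worth remembering that the second inclusion genuinely requires localizing at $d$: over $\Bbbk[z_{ij}]$ the ideals $I_{X,H_h}$ and $J_{X,h}$ need not coincide.
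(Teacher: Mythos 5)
Your proposal is correct and follows essentially the same strategy as the paper: the first inclusion via generalized Laplace expansion along the columns $v_1,\ldots,v_{h(j)},Xv_j$, the second via the Pl\"ucker relation (Lemma~\ref{lemma: Plucker}) and induction on the number of $X$-type columns in $C$, using $h(m)\le h(j)$ to keep the replaced column in range and the invertibility of $d$ to cancel it at the end. The only cosmetic difference is that the paper fixes $\ell$ to be the maximal index with $Xv_\ell\in C$ before applying the Pl\"ucker relation, whereas you allow any $Xv_m\in C$; either choice works, since the only constraint needed is $h(m)\le h(j)$, which holds for every $m\le j$.
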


\begin{proof}
We first show that $I_{X,H_h} \subseteq J_{X,h}$ by proving
$$d_{(1,\ldots,n),(v_1,\ldots,v_{i-1},Xv_j,v_{i+1},\ldots,v_n)}\in J_{X,h}$$
for all $i>h(j)$.  We use the generalized Laplace expansion $d_{(1,\ldots,n),(v_1,\ldots,v_{i-1},Xv_j,v_{i+1},\ldots,v_n)}$ simultaneously along the columns $v_1,\ldots,v_{h(j)}, Xv_j$, which gives
$$d_{(1,\ldots,n),(v_1,\ldots,v_{i-1},Xv_j,v_{i+1},\ldots,v_n)}=\sum_R (-1)^{\sum_{k\in R} k -\binom{h(j)+2}{2}} d_{R,(v_1,\ldots,v_{h(j)}, Xv_j)} d_{(1,\ldots,n)\setminus R, (v_{h(j)+1},\ldots, v_{i-1}, v_{i+1},\ldots, v_n)},$$
where the sum is over all subsets $R\subseteq\{1,\ldots, n\}$ of size $h(j)+1$.  An expansion like this only requires $i \neq h(j)$ but the form we wrote assumes additionally that $i>h(j)$.  By the definition of $J_{X,j,h(j)}$ we know $d_{R,(v_1,\ldots,v_{h(j)}, Xv_j)}\in J_{X,j,h(j)}$ for all $R$.  By definition $J_{X,j,h(j)}\subseteq J_{X,h}$ so 
$$d_{(1,\ldots,n),(v_1,\ldots,v_{i-1},Xv_j,v_{i+1},\ldots,v_n)}\in J_{X,h}.$$
(This part of our proof works over $\Bbbk[z_{ij}]$, the coordinate ring of the space of $n\times n$ matrices.)

Now we show that $J_{X,h} \subseteq I_{X,H_h}$.  It suffices to prove $d_{R,C}\in I_{X,H_h}$ whenever $R\subseteq\{1,\ldots,n\}$, $C\subseteq \{Xv_1,\ldots,Xv_j, v_1,\ldots, v_{h(j)}\}$, and $\#R=\#C=h(j)+1$.  
Let $d=d_{(1,\ldots,n),(v_1,\ldots,v_n)}$, and let $\ell$ be the maximum index such that $Xv_{\ell}\in C$. For each $i$ let $C_i$ be the multiset defined by $C_i=( C\setminus\{Xv_{\ell}\}) \cup\{v_i\}.$
Then
$$d_{R,C}d=\sum_{i=1}^n d_{R, C_i} d_{(1,\ldots,n), (v_1,\ldots, v_{i-1}, Xv_{\ell}, v_{i+1},\ldots, x_n)}$$
 by the Pl\"ucker relation in Lemma~\ref{lemma: Plucker}.   
(In Lemma~\ref{lemma: Plucker}, the matrix $d_{R,C}$ is the $m \times m$ matrix constructed from certain rows of the subset $C$ of columns, while the matrix $d$ is given by the $n$ columns $v_1, v_2, \ldots, v_n$ and the map $\phi$ selects the entries in the rows identified by $R$.  In other words, each $v_i$ in this theorem coincides with $v_i$ from Lemma~\ref{lemma: Plucker} while the vectors $u_1,\ldots,u_m$ are the elements of $C$.)
 
We now induct on $k$ where $k=\#\left(\{Xv_1,\ldots,Xv_j\}\cap C\right)$.  In the base case when $k=1$, either $i>h(\ell)$ or $i\leq h(\ell)$.  In the first case $d_{(1,\ldots,n), (v_1,\ldots, v_{i-1}, Xv_{\ell}, v_{i+1},\ldots, x_n)}\in I_{X,H_h}$ by definition, and in the second $d_{R,C_i}=0$ since $C_i$ contains $v_i$ twice.  Hence every term on the right hand side is in $I_{X,H_h}$, so $d_{R,C}d\in I_{X,H_h}$.  Thus $d_{R,C}\in I_{X,H_h}$ because $d$ is a unit.

The inductive hypothesis states that $d_{R',C'}\in I_{X,H_h}$ whenever $\#\left(\{Xv_1,\ldots,Xv_j\}\cap C'\right)=k-1$.  Now consider $d_{R,C}$.  If $i\leq h(\ell)$ then $C_i$ contains $k-1$ vectors of the form $Xv_m$ for $m\leq j$. (It also contains $h(j)-k+2$ vectors of the form $v_m$ for $m\leq h(j)$.)  Thus by induction $d_{R,C_i}\in I_{X,H_h}$ in this case.  If $i>h(\ell)$ then again $d_{(1,\ldots,n), (v_1,\ldots, v_{i-1}, Xv_{\ell}, v_{i+1},\ldots, x_n)}\in I_{X,H_h}$ by definition.  Hence every term in the right hand side is in $I_{X,H_h}$, and so $d_{R,C}\in I_{X,H_h}$.
\end{proof}

\section{Matrix Hessenberg varieties and the main theorem}

We now prove Theorem \ref{theorem: main theorem} by realizing $J_{X,h}$ as the tensor product of a Schubert determinantal ideal with a quotient by a linearly generated ideal and  applying Theorem \ref{theorem: main step}.

Let $M_{2n}$ be the space of $2n\times 2n$ matrices, and denote the coordinate ring $S=\Bbbk[y_{ij}]$ for $1\leq i,j\leq 2n$.  Let $R=\Bbbk[z_{ij}]$ for $1\leq i,j\leq n$ be the coordinate ring of $M_n$.  Given a linear operator $X$ and a Hessenberg function $h:\{1,\ldots,n\}\rightarrow\{1,\ldots,n\}$ we now define a ring homomorphism $\phi_{X,h}: S\rightarrow R$.  
Intuitively, $\phi_{X,h}$ will be induced from the map $M_n\rightarrow M_{2n}$ given by starting with the ordered columns $v_1, v_2, \ldots, v_n$, and then for each $j$ inserting $Xv_j$ immediately after the column with $v_{h(j)}$ (and after all $Xv_1, \ldots, X v_{j-1}$ if there are any other $j'$ for which $h(j')=h(j)$).  The bottom half of the matrix in $M_{2n}$ is then filled with zeroes, and the  rank conditions imposed on this matrix are equivalent to the geometric definition of the Hessenberg variety.

More precisely, we do as follows.  If $i>n$ then $\phi_{X,h}(y_{ij})=0$.  Otherwise:
\begin{itemize}
\item If $j=m+h(m)$ for some $m$, then $\phi_{X,h}(y_{ij})$ is the $i$-th entry of $Xv_j$ where $v_j=\sum_{i=1}^n z_{ij} e_i$ is the $j$-th column of a generic element of $M_n$.
\item If there is no $m$ for which $j= m+h(m)$, then $j=m+h^\prime(m)$ for some $m$, where $h^\prime(m)=\#\{p\mid h(p)<m\}$.  In this case $\phi_{X,h}(y_{ij})=z_{im}$.
\end{itemize}
Note that for any $X$ and $h$, the map $\phi_{X,h}$ is surjective, and $\ker \phi_{X,h}$ is generated by $3n^2$ independent linear forms. 

Recall from the introduction that we define a permutation $w_{h}\in S_{2n}$ from a Hessenberg function $h$ by letting $w_h(m+h(m))=n+m$ for each $m$ and putting $1,\ldots,n$ in the other entries in order.  

We need the following lemma about $w_h$.

\begin{lemma} \label{lemma: Hess perm length}
The length of $w_h$ is 
$$\ell(w_h)=\sum_{i=1}^n (n-h(i)).$$
\end{lemma}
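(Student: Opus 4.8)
The length of a permutation is its number of inversions, so the plan is to count inversions of $w_h$ directly from the combinatorial description: $w_h$ sends the $n$ positions $\{m + h(m) : 1 \le m \le n\}$ to the values $n+1, \ldots, 2n$ (with $m+h(m) \mapsto n+m$, which is order-preserving on those positions since $m \mapsto m + h(m)$ is strictly increasing), and fills the remaining $n$ positions with $1, \ldots, n$ in increasing order. Call the first set of positions the "big" positions $P = \{m+h(m)\}$ and the complement the "small" positions. Since the big values $n+1,\ldots,2n$ are placed in increasing order across the big positions, and likewise the small values across the small positions, there are no inversions among two big positions and none among two small positions. Hence every inversion consists of one big position $p$ and one small position $q$ with $q > p$ (a later small position carrying a smaller value than an earlier big position). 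So $\ell(w_h)$ equals the number of pairs (big position $p$, small position $q$) with $q > p$.

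Next I would reorganize this count by the big position. Fix $m$ and let $p = m + h(m)$ be the corresponding big position. The number of small positions $q > p$ is (total positions exceeding $p$) minus (big positions exceeding $p$), i.e. $(2n - (m+h(m))) - (n - m)$, because exactly $n - m$ of the big positions $m'+h(m')$ with $m' > m$ exceed $p$ (using that $m' \mapsto m'+h(m')$ is strictly increasing). This simplifies to $n - h(m)$. Summing over $m = 1, \ldots, n$ gives $\ell(w_h) = \sum_{m=1}^n (n - h(m))$, as claimed.

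The only slightly delicate points are bookkeeping ones: verifying that $m \mapsto m + h(m)$ is strictly increasing (immediate from $h(m) \le h(m+1)$, which gives $m + h(m) < (m+1) + h(m+1)$), so that the big positions are genuinely $n$ distinct positions and the value assignment $m+h(m) \mapsto n+m$ is order-preserving; and checking that the small values and small positions are matched in increasing order by the definition "put $1,\ldots,n$ in the other entries in order." Granting these, there is no real obstacle — the argument is a clean double count with no case analysis. If one prefers, the same total can alternatively be obtained by summing over small positions using the $h'$ description $w_h(i + h'(i)) = i$, but indexing by the big positions is the most transparent route.
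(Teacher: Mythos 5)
Your proof is correct and follows essentially the same approach as the paper: identify that all inversions pair a position carrying a value in $\{n+1,\ldots,2n\}$ with a later position carrying a value in $\{1,\ldots,n\}$, then for each $m$ count the positions $q > m+h(m)$ that are not themselves of the form $m'+h(m')$, giving $(2n - m - h(m)) - (n-m) = n - h(m)$. The bookkeeping and the final summation are identical to the paper's argument.
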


\begin{proof}
Since $w_h^{-1}(n+1)<\cdots<w_h^{-1}(2n)$ and $w_h^{-1}(1)<\cdots<w_h^{-1}(n)$ by definition, the only possible inversions in $w_h$ are at indices $i<j$ where $w_h(i)>n$ and $w_h(j)\leq n$.  If $i=m+h(m)$ then there are $2n-m-h(m)$ indices with $j>i$, of which $n-m$ have $w_h(i)<w_h(j)$.  Hence $n-h(m)$ of them are inversions, and $w_h$ has $\sum_{i=1}^n (n-h(i))$ inversions.
\end{proof}

Finally we show that the $J_{X,h}$ can be written as follows.

\begin{proposition} \label{proposition: main step}
For any linear operator $X$ and Hessenberg function $h$ $$J_{X,h}=\phi_{X,h}(I_{w_h}).$$
\end{proposition}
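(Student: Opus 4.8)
The plan is to describe both $\phi_{X,h}(I_{w_h})$ and $J_{X,h}$ concretely as ideals of minors of a single matrix and compare generators. Applying $\phi_{X,h}$ to the generic $2n\times 2n$ matrix $(y_{ij})$ produces the $2n\times 2n$ matrix whose bottom $n$ rows vanish and whose top $n$ rows form an $n\times 2n$ matrix $A$ in which column $m+h(m)$ equals $Xv_m$ and column $i+h^\prime(i)$ equals $v_i$; these two families of column indices partition $\{1,\ldots,2n\}$, by exactly the bookkeeping that defines $w_h$. Since $\phi_{X,h}$ is surjective, for every pair $(i,j)$ the ideal $\phi_{X,h}(I_{w_h,i,j})$ is generated by the images of the defining minors, that is, by the $(r_{ij}(w_h)+1)$-minors of the northwest $i\times j$ submatrix of this block matrix.

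Next I would determine the essential set. The key point is that $w_h^{-1}$ has at most one descent, necessarily at position $n$: its values on positions $1,\ldots,n$ are $i+h^\prime(i)$ and on positions $n+1,\ldots,2n$ are $m+h(m)$, and both sequences are strictly increasing because $h$ and $h^\prime$ are nondecreasing. The second pair of inequalities in the definition of $\mathcal{E}(w_h)$ forces $i$ to be a descent of $w_h^{-1}$, hence $i=n$; the first pair then forces $j$ to be a descent of $w_h$, i.e.\ $j=m+h(m)$ with column $j+1$ of the permutation matrix of ``$v$-type''. So $\mathcal{E}(w_h)=\{(n,j)\mid j\text{ a descent of }w_h\}$ (the reverse containments are immediate checks), where we may assume $w_h$ is not the identity, since $h\equiv n$ makes both $I_{w_h}$ and $J_{X,h}$ zero. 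For such a $j$ a direct count gives $r_{n,j}(w_h)=h(m)$, and, using the partition above, the first $j$ columns of $A$ are exactly $Xv_1,\ldots,Xv_m,v_1,\ldots,v_{h(m)}$; reading off minors then gives $\phi_{X,h}(I_{w_h,n,j})=J_{X,m,h(m)}$. By Lemma~\ref{lemma: essential set decomposition}, $I_{w_h}=\sum_{(n,j)\in\mathcal{E}(w_h)} I_{w_h,n,j}$, so $\phi_{X,h}(I_{w_h})=\sum J_{X,m,h(m)}$, the sum over those $m$ for which $m+h(m)$ is a descent.

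Finally I would show this partial sum equals the full sum $J_{X,h}=\sum_{m=1}^n J_{X,m,h(m)}$. If $m+h(m)$ is not a descent then either $m+h(m)=2n$, which forces $m=n$ and $h(n)=n$, so $J_{X,n,n}$ is generated by $(n+1)$-minors of an $n$-rowed matrix and hence is $0$; or $m<n$ and $h(m+1)=h(m)$, in which case every generator of $J_{X,m,h(m)}$ is also a generator of $J_{X,m+1,h(m+1)}$. Iterating the second case absorbs every missing term into a term indexed by a descent (or into the zero ideal $J_{X,n,n}$), so the two sums coincide.

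The step I expect to be the main obstacle is the essential-set and rank bookkeeping in the middle paragraph: one must check carefully---not merely on examples---that $w_h^{-1}$ has no descent before position $n$, that for a descent $j=m+h(m)$ the first $j$ columns of $A$ are precisely $Xv_1,\ldots,Xv_m,v_1,\ldots,v_{h(m)}$, and that $r_{n,j}(w_h)=h(m)$. All three reduce to the monotonicity of $h$ together with the fact that the indices $\{i+h^\prime(i)\}$ and $\{m+h(m)\}$ partition $\{1,\ldots,2n\}$, but the details require care. The absorption argument in the last step is routine once stated correctly.
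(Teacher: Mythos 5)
Your proof is correct and follows essentially the same route as the paper's: reduce to the essential set via Lemma~\ref{lemma: essential set decomposition}, observe that $\mathcal{E}(w_h)$ lies in row $n$ at the descents of $w_h$, compute $r_{n,m+h(m)}(w_h)=h(m)$ and identify $\phi_{X,h}(I_{w_h,n,m+h(m)})=J_{X,m,h(m)}$, then absorb the non-descent terms $J_{X,m,h(m)}\subseteq J_{X,m+1,h(m+1)}$ when $h(m)=h(m+1)$. The only difference is presentational (you frame the column bookkeeping in terms of a block matrix and explicitly flag the $J_{X,n,n}=0$ endpoint, which the paper handles via the $h\equiv n$ degenerate case).
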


\begin{proof}
Recall that the essential set of a permutation $w$ is given by
$$\mathcal{E}(w)=\{(i,j)\mid w(j)>i\geq w(j+1), w^{-1}(i)>j\geq w^{-1}(i+1)\}.$$
If $h(m)=n$ for all $m$ then $E(w_h)=\emptyset$ and $I_{w_h}=J_{X,h}=\langle 0\rangle$.
Otherwise,
\begin{itemize}
\item $w_h^{-1}(i)>w_h^{-1}(i+1)$ if and only if $i=n$, and 
\item $w_h(j)>w_h(j+1)$ if and only if $j=m+h(m)$ for some $m$ such that $h(m)<h(m+1)$, in which case
$w_h(j)>n\geq w_h(j+1)$ and $w_h^{-1}(n)>j\geq w_h^{-1}(n+1)$. 
\end{itemize}
Hence by Lemma \ref{lemma: essential set decomposition}
$$I_{w_h}=\sum_{m \textup{ s.t. }h(m)<h(m+1)} I_{w_h,n,m+h(m)}.$$
Furthermore $r_{n,m+h(m)}(w_h)=h(m)$ so  
$$I_{w_h,n,m+h(m)}=\langle d_{R,C} \mid R\subseteq\{1,\ldots,n\}, C\subseteq\{u_1,\ldots,u_{m+h(m)}\}, \#R=\#C = h(m)+1\rangle,$$ 
where $u_j=\sum_{i=1}^n y_{ij} e_i$ is the vector given by the top half of the $j$-th column of a generic element of $M_{2n}$. 

The image $\phi_{X,h}(I_{w_h,n,m+h(m)})$  can be written explicitly as
$$\phi_{X,h}(I_{w_h,n,m+h(m)})=\langle d_{R,C} \mid R\subseteq\{1,\ldots,n\}, C\subseteq\{v_1,\ldots,v_{h(m)}, Xv_1,\ldots,Xv_m\}, \#R=\#C= h(m)+1\rangle,$$
which is $J_{X,m,h(m)}$ by definition.  When $h(m)=h(m+1)$ we have $J_{X,m,h(m)}\subseteq J_{X,m+1,h(m+1)}$ also by definition, and so
$$J_{X,h}=\sum_{m \textup{ s.t. } h(m)<h(m+1)} J_{X,m,h(m)}=\phi_{X,h}(I_{w_h}).$$
\end{proof}

We now prove Theorem \ref{theorem: main theorem} by assembling the key steps above. 

\begin{proof}
Give $S$ the grading such that $\deg(y_{ij})=x_m$ if $j=m+h(m)$ or $j=m+h^\prime(m)$, and give $R$ the grading such that $\deg(z_{ij})=x_j$.  This grading makes $\phi_{X,h}$ a graded ring homomorphism.  By Theorem \ref{theorem: main Schubert theorem}, we have
$$[S/I_{w_h}]=\mathfrak{G}_{w_h}(x_1,\ldots,x_{h(1)},x_1,x_{h(1)+1},\ldots,x_{h(2)},x_2,x_{h(2)+1},\ldots,x_{h(n)},x_n)$$
in $K_T^0(M_{2n})$.
By Proposition~\ref{proposition: main step} we know $R/J_{X,h} = \phi_{X,h}(S/I_{w_h})$.

Since $X$ is regular we can use Precup's dimension result~\cite[Cor. 2.7]{Pre16} to conclude $\dim(\Y_{X,h})=\sum_{i=1}^n (h(i)-i)$.
Hence, since, by Theorem~\ref{theorem: two Hessenberg ideals} $J_{X,h}$ is the defining ideal for $\pi^{-1}(\Y_{X,h})$, we conclude
$$\dim((R/J_{X,h})_\mathfrak{p})=\dim(B)+\dim(\Y_{X,h})=\binom{n+1}{2}+\sum_{i=1}^n (h(i)-i)=\sum_{i=1}^n h(i)$$
for any $\mathfrak{p}\in G$.
On the other hand, by Proposition~\ref{proposition: matrix Schubs} and Lemma~\ref{lemma: Hess perm length},
$$\dim(S/I_{w_h})=4n^2-\sum_{i=1}^n (n-h(i)) = 3n^2+\sum_{i=1}^n h(i).$$
It follows that $\dim((R/J_{X,h})_\mathfrak{p}) = \dim(S/I_{w_h}) - 3n^2$ for any $\mathfrak{p}\in G$.

Since $\ker(\phi_{X,h})$ is generated by $3n^2$ independent linear forms, we can apply Theorem \ref{theorem: main step} with $M=S/I_{w_h}$, $a=3n^2$, and $A=M_n\setminus G$ to get that
$$[R/J_{X,h}]=\mathfrak{G}_{w_h}(x_1,\ldots,x_{h(1)},x_1,x_{h(1)+1},\ldots,x_{h(2)},x_2,x_{h(2)+1},\ldots,x_{h(n)},x_n)+f$$
in $K_T^0(M_n)$ for some $f\in K_T^0(M_n\setminus G)$.  By Theorem \ref{theorem: two Hessenberg ideals} and Proposition \ref{proposition: relate class to K-polynomial} we see that 
\[\mathfrak{G}_{w_h}(x_1,\ldots,x_{h(1)},x_1,x_{h(1)+1},\ldots,x_{h(2)},x_2,x_{h(2)+1},\ldots,x_{h(n)},x_n)\]
represents the class of the Hessenberg variety $\Y_{X,h}$ modulo the kernel of $K_T^0(M_n)\rightarrow K_T^0(G)$.

The statement for cohomology follows by taking the Chern character map.
\end{proof}

\begin{remark} \label{remark: cohen-macaulay}
Recent work of Abe, Fujita, and Zeng \cite{AFZ18} implies that regular Hessenberg varieties, by our definition, are always Cohen-Macaulay.  Indeed, the definition of $\Y_{X,H}$ shows that the scheme is a local complete intersection, and hence Cohen-Macaulay, whenever it meets the dimension constraints; Precup's result shows this dimension constraint  holds for regular operators \cite{Pre16}.  Abe, Fujita, and Zeng's result is that the scheme is reduced in the case when $h(i)>i$ for all $i$, and thus the reduced variety is Cohen-Macaulay in this case.

We now sketch an alternate proof of this result using the results in this paper.

Fulton proved that each matrix Schubert variety is Cohen-Macaulay~\cite{Ful92}.  Thus $S/I_{w_h}$ is Cohen-Macaulay. Recall that Theorem \ref{theorem: main step} used the fact that $M$ is Cohen--Macaulay to find $\ell_1,\ldots,\ell_a$ that form a regular sequence on $M_{\mathfrak{p}}$~\cite[Theorem 2.1.2]{BH93}. It follows that $(M\otimes_S R)_\mathfrak{p}$ is Cohen--Macaulay for any $\mathfrak{p} \not \in A$~\cite[Theorem 2.1.3]{BH93}. Using the same $M$ and $a$ as in Theorem~\ref{theorem: main theorem}, we conclude $R/J_{X,h}$ is Cohen--Macaulay at $\mathfrak{p}$ for any $\mathfrak{p}\in G$.  Finally, since $\pi: G \rightarrow G/B$ is a fiber bundle with fibers isomorphic to $B$, given any closed subscheme $Y \subseteq G/B$, the local ring
\[\mathcal{O}_{\pi^{-1}(Y),g} \cong \mathcal{O}_{Y,gB} \otimes \mathcal{O}_{B,e}\]
 for any $g$.
Since $B$ is smooth, $Y$ is Cohen-Macaulay if and only if $\pi^{-1}(Y)$ is.  In particular $\Y_{X,h}$ is Cohen-Macaulay.
\end{remark}

\section*{Acknowledgements}

We thank Martha Precup for suggesting that our work applies to all regular Hessenberg varieties.  AW was a sabbatical visitor at the University of Illinois at Urbana--Champaign and the University of Washington while this work was completed, and he thanks their Departments of Mathematics for their hospitality.  We also thank the anonymous referees for their helpful remarks.

\end{document}